\numberwithin{equation}{section}
\newtheorem{theorem}{Theorem}[section]
\newtheorem{definition}[theorem]{Definition}
\newtheorem{remark}[theorem]{Remark}
\newtheorem{lemma}[theorem]{Lemma}
\newtheorem{example}[theorem]{Example}
\newtheorem{corollary}[theorem]{Corollary}
\newcommand{\edge}{\ar@{-}}
\newcommand{\pf}{\noindent\begin {proof}}
\newcommand{\epf}{\end{proof}}
\newcommand{\Hom}{\mbox{\rm Hom}}
\def\La{\mathop{\rm \Lambda}\nolimits}
\def\mod{\mathop{\rm mod}\nolimits}
\def\pd{\mathop{\rm pd}\nolimits}
\def\max{\mathop{\rm max}\nolimits}
\def\sup{\mathop{\rm sup}\nolimits}
\def\inf{\mathop{\rm inf}\nolimits}
\def\add{\mathop{\rm add}\nolimits}
\def\gldim{\mathop{\rm gl.dim}\nolimits}
\def\lFindim{\mathop{\rm l.Fin.dim}\nolimits}
\def\rad{\mathop{{\rm rad}}\nolimits}
\def\top{\mathop{{\rm top}}\nolimits}
\def\dim{\mathop{\rm dim}\nolimits}
\def\Hom{\mathop{\rm Hom}\nolimits}
\def\sup{\mathop{\rm sup}\nolimits}
\def\lim{\mathop{\underrightarrow{\rm lim}}\nolimits}
\def\gen{\mathop{\rm gen}\nolimits}
\def\End{\mathop{\rm End}\nolimits}
\def\C{\mathop{\rm \mathcal{C}}\nolimits}
\def\S{\mathop{\rm \mathcal{S}}\nolimits}
\def\V{\mathop{\rm \mathcal{V}}\nolimits}
\def\LL{\mathop{\rm LL}\nolimits}
\def\gen.dim{\mathop{\rm gen.dim}\nolimits}
\title{ \bf Radical layer length and syzygy-finite algebras 
  \thanks{2010 Mathematics Subject Classification: 18G25, 16E30, 16E10.}
\thanks{Keywords: derived categories, left big finitistic dimension,
radical layer length, syzygy-finite}}
\author { \ Junling  Zheng \thanks{Email: zhengjunling@cjlu.edu.cn}
\\
{\it \scriptsize  Department of Mathematics, China Jiliang University, Hangzhou, 310018, P. R. China }}
\date{ }
\begin{document}

\baselineskip=16pt


\maketitle

\begin{abstract}
Let $\Lambda$ be an artin algebra.
We obtain that $\Lambda$ is syzygy-finite when the radical
 layer length of $\Lambda$ is at most two; as two
consequences, we give a new upper bound for the dimension
 of the bounded derived category of
the category $\mod \Lambda$ of finitely generated right
 $\Lambda$-modules in terms of the projective
of certain class of simple right $\Lambda$-modules and also
 get the left big finitistic dimension conjecture holds.
\end{abstract}

\pagestyle{myheadings}
\markboth{\rightline {\scriptsize  }}
         {\leftline{\scriptsize  Radical layer length and syzygy-finite algebras }}

\tableofcontents    

\section{Introduction} 
Given a triangulated category $\mathcal{T}$, Rouquier introduced in
 \cite{rouquier2006representation} the dimension $\dim\mathcal{T}$ of $\mathcal{T}$
under the idea of Bondal and van den Bergh in \cite{bondal2003generators}.
 This dimension and the infimum of the Orlov spectrum of $\mathcal{T}$
coincide, see \cite{orlov2009remarks,ballard2012orlov}. Roughly speaking,
 it is an invariant that measures how quickly 
 the category can be built from one object.
Many authors have studied the upper bound of $\dim \mathcal{T}$,
 see \cite{ballard2012orlov, bergh2015gorenstein, chen2008algebras,
  han2009derived, krause2006rouquier, oppermann2012generating,
   rouquier2006representation, rouquier2008dimensions,
   zheng2020thedimension,zheng2020extension,zheng2020upper} and so on.
There are a lot of triangulated categories having infinite dimension,
for instance, Oppermann and \v St'ov\'\i \v cek proved 
in \cite{oppermann2012generating} that
all proper thick subcategories of the bounded derived category
 of finitely generated modules over a Noetherian algebra
containing perfect complexes have infinite dimension.

Let $\Lambda$ be an artin algebra. Let $\mod \Lambda$ be the 
category of finitely generated right $\Lambda$-modules
and let $D^{b}(\mod \Lambda)$ be the bounded derived category of
$\mod \Lambda$. The upper bounds for the dimensions of
 the bounded derived category of
$\mod \Lambda$ can be given in terms of
the Loewy length $\LL(\Lambda)$ and the global dimension $\gldim\Lambda$ of $\Lambda$.

For a length-category $\mathcal{C}$, 
generalizing the Loewy length, Huard, Lanzilotta and Hern\'andez introduced
in \cite{huard2013layer,huard2009finitistic} the (radical) layer 
length associated with a torsion pair, which is a new measure for objects of
$\mathcal{C}$. Let $\Lambda$ be an artin algebra and $\mathcal{V}$ 
a set of some simple modules in $\mod \Lambda$.
Let $t_{\mathcal{V}}$ be the torsion radical of a 
torsion pair associated with $\mathcal{V}$ (see Section 3 for details). We use
$\ell\ell^{t_{\mathcal{V}}}(\Lambda)$ to 
denote the $t_{\mathcal{V}}$-radical layer length of $\Lambda$.
For a module $M$ in $\mod \Lambda$, we use $\pd M$ to denote the
 projective dimensions of $M$;
in particular, set $\pd M=-1$ if $M=0$.
 For a subclass $\mathcal{B}$ of $\mod \Lambda$,
  the {\bf projective dimension} $\pd\mathcal{B}$
of $\mathcal{B}$ is defined as
\begin{equation*}
\pd \mathcal{B}=
\begin{cases}
\sup\{\pd M\;|\; M\in \mathcal{B}\}, & \text{if} \;\; \mathcal{B}\neq \varnothing;\\
-1,&\text{if} \;\; \mathcal{B}=\varnothing.
\end{cases}
\end{equation*}
 Note that $\mathcal{V}$ is a finite set. So, if each simple module
in $\mathcal{V}$ has finite projective dimension,
 then $\pd \mathcal{V}$ attains its (finite) maximum.

Now, let us list some results about the upper bound of 
the dimension of bounded derived categries.
\begin{theorem} \label{thm1.1}
Let $\Lambda$ be an artin algebra and $\mathcal{V}$ 
a set of some simple modules in $\mod \Lambda$. Then we have
\begin{enumerate}
\item[(1)] {\rm (\cite[Proposition 7.37]{rouquier2008dimensions})}
 $\dim D^{b}(\mod \Lambda) \leqslant \LL(\Lambda)-1;$
\item[(2)] {\rm (\cite[Proposition 7.4]{rouquier2008dimensions} 
and \cite[Proposition 2.6]{krause2006rouquier})}
 $\dim D^{b}(\mod \Lambda) \leqslant \gldim \Lambda$;
\item[(3)] {\rm (\cite[Theorem 3.8]{zheng2020upper})}
  $\dim D^{b}(\mod \Lambda) \leqslant 
  (\pd\mathcal{V}+2)(\ell\ell^{t_{\mathcal{V}}}(\Lambda)+1)-2;$
\item[(4)] {\rm (\cite{zheng2020thedimension})}
$\dim D^{b}(\mod \Lambda) \leqslant 
2(\pd\mathcal{V}+\ell\ell^{t_{\mathcal{V}}}(\Lambda))+1.$
\end{enumerate}
\end{theorem}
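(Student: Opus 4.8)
The four bounds are all on record, so the plan is not to reprove them from scratch but to recall the common mechanism behind each and to say where the real work sits. In every case one proceeds in two stages: first filter an arbitrary module $M\in\mod\Lambda$ by a descending chain whose subquotients lie in the additive closure of a single, well-understood object, and then pay both for the passage to $D^{b}(\mod\Lambda)$ and for the extensions in the filtration by counting mapping cones, using Rouquier's criterion that $\dim\mathcal{T}\leqslant d$ exactly when $\mathcal{T}$ is built from one object in $d+1$ steps.

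For (1) I would take the radical series $M\supseteq\rad M\supseteq\cdots\supseteq\rad^{\LL(\Lambda)}M=0$. Each subquotient is semisimple, hence in $\add(\Lambda/\rad\Lambda)$, so $M$ is obtained from $\Lambda/\rad\Lambda$ by $\LL(\Lambda)-1$ extensions; feeding this into the generation estimate for the bounded derived category of a length category over a module subcategory yields $\dim D^{b}(\mod\Lambda)\leqslant\LL(\Lambda)-1$, which is \cite[Proposition 7.37]{rouquier2008dimensions}. For (2) there is nothing to do when $\gldim\Lambda=\infty$; otherwise, with $d=\gldim\Lambda$, every module has a projective resolution of length at most $d$, so $D^{b}(\mod\Lambda)$ is the category of perfect complexes and is built from $\Lambda$ in $d+1$ steps, whence $\dim D^{b}(\mod\Lambda)\leqslant d$. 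This is \cite[Proposition 7.4]{rouquier2008dimensions} combined with \cite[Proposition 2.6]{krause2006rouquier}.

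For (3) and (4) the crude radical series is refined: one alternates the torsion radical $t_{\mathcal{V}}$ of the torsion pair attached to $\mathcal{V}$ with the ordinary radical, so that the length of the resulting series is by definition $\ell\ell^{t_{\mathcal{V}}}(\Lambda)$, while its subquotients are either semisimple with no composition factor in $\mathcal{V}$ or direct sums of simples from $\mathcal{V}$ — the latter being absorbed through their projective resolutions, hence controlled by $\pd\mathcal{V}$. Accumulating these two contributions along the cones of the filtration produces the two stated inequalities, which are precisely \cite[Theorem 3.8]{zheng2020upper} and the main theorem of \cite{zheng2020thedimension}.

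The delicate point — and the reason (3) and (4) come out as two incomparable estimates rather than one — is exactly this last accumulation: one must decide how to interleave the ``height'' $\ell\ell^{t_{\mathcal{V}}}(\Lambda)$ of the filtration with the projective-dimension bound $\pd\mathcal{V}$ of its semisimple layers, and different bookkeeping schemes produce different coefficients. Since the statements are quoted verbatim from the cited sources, once the mechanism above is in place the proof reduces to assembling these references.
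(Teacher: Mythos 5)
The paper itself offers no proof of Theorem 1.1: all four items are quoted directly from the cited sources, and the theorem is a survey list rather than a result established in the text. You correctly recognize this and defer to the references, and your accompanying sketch of the filtration-plus-cone-counting mechanism behind each bound is accurate and consistent with how those sources proceed, so there is nothing to fault.
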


For an integer $m\geqslant 0$,
we denote by $\Omega^{m}(X)$  the $m$-th syzygy of $X\in \mod \Lambda$ and
we denote by
$$\Omega^{m}(\mod \Lambda)=\{ M\;|\; M\text{ is a direct summand of }
\Omega^{m}(N)\text{ for some } N \in \mod \Lambda\}.$$
Following \cite[P. 834]{xi2017Finitistic}, $\Lambda$ is called \emph{ $m$-syzygy-finite}
if there are only finitely many non-isomorphic indecomposable modules in
$\Omega^{m}(\mod \Lambda)$. If there is some nonnegative integer $m$, such that
$\Lambda$ is $m$-syzygy-finite, then $\Lambda$ is said to be syzygy-finite.

The aim of this paper is to prove the following

\begin{theorem}{\rm (see Theorem \ref{thm-1}
   and Corollary \ref{derived-dimension1})}
   Let $A$ be an artin algebra.
  Let $\V\subseteq \S^{<\infty}$. If $\ell\ell^{t_{\V}}(A_{A})\leqslant 2$,
  then $ A$ is $(\pd \V +2)$-syzygy-finite
 and $\dim {D^{b}(\mod A)}\leqslant \pd \V+3$ and the left big finitistic dimension conjecture holds.
    \end{theorem}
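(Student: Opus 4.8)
The plan is to establish the syzygy‑finiteness first, and then deduce the bound on $\dim D^b(\mod A)$ and the left big finitistic dimension statement from it. The first step is to decode the hypothesis $\ell\ell^{t_{\V}}(A_A)\leqslant 2$. Writing $F$ for the functor defining the $t_{\V}$-radical layer length, a direct computation gives, for every indecomposable projective right $A$-module $P$, that $F(P)=t_{\V}(\rad P)$ (the term $\rad(t_{\V}P)$ being absorbed, since $P$ is a local module), and that on modules without composition factors in $\V$ the functor $F$ agrees with the ordinary radical functor. Hence $F(F(P))=\rad\big(t_{\V}(\rad P)\big)$, so the hypothesis says exactly that $t_{\V}(\rad P)$ is semisimple for every indecomposable projective $P$; equivalently, the largest submodule of $\rad A_A$ without composition factors in $\V$ is semisimple. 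For $\V=\varnothing$ this reads $\rad^2A=0$, so the theorem is a $\V$-relative version of the classical fact that $\rad^2A=0$ forces $\Omega(\mod A)\subseteq\add(A/\rad A)$. Concretely, every $\rad P$ then fits in a short exact sequence $0\to t_{\V}(\rad P)\to\rad P\to \rad P/t_{\V}(\rad P)\to 0$ whose left term is semisimple with composition factors outside $\V$ and whose right term is torsion‑free for $t_{\V}$, i.e. has socle in $\add\V$; write $W$ for the (fixed, finitely generated) direct sum of the modules $\rad P_i/t_{\V}(\rad P_i)$ over the indecomposable projectives $P_i$.

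The core is a syzygy recursion. Given $M\in\mod A$ with projective cover $P\twoheadrightarrow M$, one has $\Omega(M)\subseteq\rad P$, hence a short exact sequence $0\to N_1\to\Omega(M)\to N_2\to 0$ in which $N_1=\Omega(M)\cap t_{\V}(\rad P)$ is semisimple with composition factors outside $\V$, and $N_2$ embeds into a finite direct sum of copies of $W$; in particular $N_2$ is torsion‑free. The ``$N_1$-lineage'' is controlled: $N_1$ lies in $\add\big(\bigoplus_S S\big)$, a fixed finite set, and $\Omega(N_1)\in\add\big(\bigoplus_S\rad P_S\big)$, again fixed and finitely generated, so after one more syzygy this part is absorbed into a fixed module. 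The ``$N_2$-lineage'' reproduces the same shape under $\Omega$ — a semisimple piece outside $\V$ extended by a torsion‑free piece — and so does not terminate by itself; this is where $\V\subseteq\S^{<\infty}$ enters. Since $N_2$ is torsion‑free its socle lies in $\add\V$, whose members all have projective dimension $\leqslant\pd\V$, and one shows — by tracking the socle filtration of these torsion‑free modules together with the finitely many syzygies $\Omega^j(S)$, $S\in\V$, $0\leqslant j\leqslant\pd\V$ — that after at most $\pd\V$ further syzygy steps the torsion‑free contributions are resolved into a fixed finitely generated module. Assembling the two bookkeepings yields $\Omega^{\pd\V+2}(\mod A)\subseteq\add X$ for a single $X\in\mod A$ (one may take $X$ to be a suitable direct sum of $A$, of $A/\rad A$, of the $\rad P_S$, of $W$, and of the $\Omega^j(S)$ above); hence $\Omega^{\pd\V+2}(\mod A)$ has only finitely many indecomposables and $A$ is $(\pd\V+2)$-syzygy‑finite.

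Put $m=\pd\V+2$. For the derived dimension, $\Omega^m(\mod A)\subseteq\add X$ together with the syzygy triangles $\Omega^{i+1}(M)\to P_i\to\Omega^i(M)\to\Omega^{i+1}(M)[1]$ shows every module lies in $\langle A\oplus X\rangle_{m+1}$; the standard passage from modules to bounded complexes (via a termwise radical/truncation filtration, in the spirit of the proofs of Theorem \ref{thm1.1} and of \cite{rouquier2006representation,zheng2020thedimension}) then gives $\dim D^b(\mod A)\leqslant m+1=\pd\V+3$. For the left big finitistic dimension conjecture: being $m$-syzygy‑finite, for any (possibly infinitely generated) $A$-module $N$ of finite projective dimension the module $\Omega^m(N)$ is a summand of a direct sum of the finitely many indecomposable summands of $X$, so $\pd N\leqslant m+c$, where $c$ bounds the projective dimensions of those summands of $X$ that have finite projective dimension; thus $\Findim(A)<\infty$, and the left‑hand version follows either from the known implication ``(right) syzygy‑finite $\Rightarrow$ left big finitistic dimension conjecture'' or by transporting the structure above to $A^{\mathrm{op}}$ through the duality $D=\Hom_k(-,E)$.

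The step I expect to be the main obstacle is the control of the torsion‑free lineage $N_2$: unlike the semisimple lineage it is not confined to a fixed finite list of modules, and iterating $\Omega$ merely reproduces its shape, so the only available leverage is the finiteness of the projective dimensions of the simples in $\V$ — and making that leverage force termination after exactly $\pd\V$ extra steps, with an ambient module $X$ independent of $M$, is the delicate point. Everything else, including the two corollaries, is then formal given the structural results recalled in the introduction.
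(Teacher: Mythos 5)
Your plan is in the same general spirit as the paper — peel $M$ apart along the torsion pair $(\mathcal{T}_{\V},\mathfrak{F}(\V))$ and push through syzygies — but there are two genuine gaps, one of which you flag yourself, and one you don't.

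First, the ``decoding'' of $\ell\ell^{t_{\V}}(A_A)\leqslant 2$ is not correct. You claim $F_{t_{\V}}(P)=t_{\V}(\rad P)$ for indecomposable projective $P$; but $F_{t_{\V}}=\rad\circ t_{\V}$ by definition, so if $\top P\in\V'$ then $t_{\V}(P)=P$ and $F_{t_{\V}}(P)=\rad P$, which is not $t_{\V}(\rad P)$ unless $\rad P$ already lies in $\mathcal{T}_{\V}$. Consequently the hypothesis is \emph{not} equivalent to ``$t_{\V}(\rad P)$ is semisimple''; it says $t_{\V}\rad t_{\V}\rad t_{\V}(A_A)=0$, and since $\mathcal{T}_{\V}$ is a torsion class (closed under quotients and extensions but not under submodules), $\rad t_{\V}(\rad P)$ can be nonzero while still having no torsion. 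Your paraphrase recovers the right thing for $\V=\varnothing$ (where it reads $\rad^2A=0$), which is probably why it felt plausible, but it is a strictly stronger condition in general, and your decomposition $0\to N_1\to\Omega(M)\to N_2\to 0$ with $N_1=\Omega(M)\cap t_{\V}(\rad P)$ semisimple is built on that stronger reading.

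Second, the $N_2$-lineage, which you explicitly flag as ``the main obstacle'', is indeed the heart of the matter and is not resolved in your sketch. The paper's resolution is cleaner than what you gesture at: the torsion-free pieces lie in $\mathfrak{F}(\V)$ by construction, so all their composition factors are simples in $\V$, hence their projective dimension is at most $\pd\V$; no socle filtration bookkeeping is needed. The paper then combines this with two further tools you do not invoke: Lemma \ref{lem-3} (matching $\Omega^m$ of the two ends of a short exact sequence, up to projectives, once the third term has pd $\leqslant m$ or $\leqslant m-1$), and Lemma \ref{lem-4} (which yields $\ell\ell^{t_{\V}}(\Omega t_{\V}(M))\leqslant 1$ under the hypothesis, so the recursion terminates after exactly one more torsion-pair peeling). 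Concretely the paper writes down the three sequences $0\to t_{\V}(M)\to M\to q_{t_{\V}}(M)\to 0$, $0\to t_{\V}\Omega t_{\V}(M)\to\Omega t_{\V}(M)\to q_{t_{\V}}\Omega t_{\V}(M)\to 0$ and $0\to\rad t_{\V}\Omega t_{\V}(M)\to t_{\V}\Omega t_{\V}(M)\to\top t_{\V}\Omega t_{\V}(M)\to 0$, observes that in each the non-semisimple piece being discarded has pd $\leqslant\pd\V$, and concludes that $\Omega^{\pd\V+2}(M)$ is, up to projectives, a direct summand of $\Omega^{\pd\V+2}(A/\rad A)$; this gives the fixed ambient module you were hoping for as $\Omega^{\pd\V+2}(A/\rad A)\oplus A$.

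Finally, your deduction of the left big finitistic dimension statement is not correct as stated: $m$-syzygy-finiteness is a statement about finitely generated modules, and you cannot conclude that $\Omega^m(N)$ for an arbitrary (possibly infinitely generated) $N$ lands in $\add X$. The passage from (right, finitely generated) syzygy-finiteness to the left big finitistic dimension conjecture is a nontrivial theorem of Rickard on unbounded derived categories, which is exactly what the paper cites; it is not a formal consequence of a finite $\add$-closure. The derived dimension bound, by contrast, is fine in spirit — the paper gets it by citing \cite[Corollary 3.6]{Asadollahi2012On}, which is the statement you sketch via syzygy triangles and a truncation filtration.
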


    We also give examples to explain
     our results. In this case, 
     we may be able to get a better upper 
     bound on the dimension of 
   the bounded derived category of $\mod \Lambda$.
\section{Preliminaries}

\subsection{The dimension of a triangulated category}

We recall some notions from \cite{rouquier2006representation,rouquier2008dimensions,oppermann2009lower}.
Let $\mathcal{T}$ be a triangulated category and $\mathcal{I} \subseteq {\rm Ob}\mathcal{T}$.
Let $\langle \mathcal{I} \rangle$ be the full subcategory consisting of $\mathcal{T}$
of all direct summands of finite direct sums of shifts of objects in $\mathcal{I}$.
Given two subclasses $\mathcal{I}_{1}, \mathcal{I}_{2}\subseteq {\rm Ob}\mathcal{T}$, we denote $\mathcal{I}_{1}*\mathcal{I}_{2}$
by the full subcategory of all extensions between them, that is,
$$\mathcal{I}_{1}*\mathcal{I}_{2}=\{ X\mid  X_{1} \longrightarrow X \longrightarrow X_{2}\longrightarrow X_{1}[1]\;
{\rm with}\; X_{1}\in \mathcal{I}_{1}\; {\rm and}\; X_{2}\in \mathcal{I}_{2}\}.$$
Write $\mathcal{I}_{1}\diamond\mathcal{I}_{2}:=\langle\mathcal{I}_{1}*\mathcal{I}_{2} \rangle.$
Then $(\mathcal{I}_{1}\diamond\mathcal{I}_{2})\diamond\mathcal{I}_{3}=\mathcal{I}_{1}\diamond(\mathcal{I}_{2}\diamond\mathcal{I}_{3})$
for any subclasses $\mathcal{I}_{1}, \mathcal{I}_{2}$ and $\mathcal{I}_{3}$ of $\mathcal{T}$ by the octahedral axiom.
Write
\begin{align*}
\langle \mathcal{I} \rangle_{0}:=0,\;
\langle \mathcal{I} \rangle_{1}:=\langle \mathcal{I} \rangle\; {\rm and}\;
\langle \mathcal{I} \rangle_{n+1}:=\langle \mathcal{I} \rangle_{n}\diamond \langle \mathcal{I} \rangle_{1}\;{\rm for\; any \;}n\geqslant 1.
\end{align*}

\begin{definition}{\rm (\cite[Definiton 3.2]{rouquier2006representation})}\label{def2.1}
The {\bf dimension} $\dim \mathcal{T}$ of a triangulated category $\mathcal{T}$
is the minimal $d$ such that there exists an object $M\in \mathcal{T}$ with
$\mathcal{T}=\langle M \rangle_{d+1}$. If no such $M$ exists for any $d$, then we set
 $\dim \mathcal{T}=\infty.$
\end{definition}

\subsection{ Radical layer lengths and torsion pairs}

We recall some notions from \cite{huard2013layer}.
Let $\mathcal{C}$ be a {\bf length-category}, that is, $\mathcal{C}$
is an abelian, skeletally small category and every object of $\mathcal{C}$ has a finite composition series.
We use $\End_{\mathbb{Z}}(\mathcal{C})$ to denote the category of all additive functors from
$\mathcal{C}$ to $\mathcal{C}$, and use $\rad$ to denote the Jacobson radical lying in $\End_{\mathbb{Z}}(\mathcal{C})$.
For any $\alpha\in\End_{\mathbb{Z}}(\mathcal{C})$, set the {\bf $\alpha$-radical functor} $F_{\alpha}:=\rad\circ \alpha$.

\begin{definition}{\rm (\cite[Definition 3.1]{huard2013layer})\label{def2.5}
For any $\alpha, \beta \in \End_{\mathbb{Z}}(\mathcal{C})$, we define
the {\bf $(\alpha,\beta)$-layer length} $\ell\ell_{\alpha}^{\beta}:\mathcal{C} \longrightarrow \mathbb{N}\cup \{\infty\}$ via
$\ell\ell_{\alpha}^{\beta}(M)=\inf\{ i \geqslant 0\mid \alpha \circ \beta^{i}(M)=0 \}$; and
the {\bf $\alpha$-radical layer length} $\ell\ell^{\alpha}:=\ell\ell_{\alpha}^{F_{\alpha}}$.}
\end{definition}

For more information about radical layer length, we can see
\cite{huard2009finitistic,huard2013layer,zheng2020upper,zheng2020extension}.

\begin{lemma}{\rm (\cite[Lemma 2.6]{zheng2020upper})}\label{lem2.6}
Let $\alpha,\beta \in\End_{\mathbb{Z}}(\mathcal{C}) $.
For any $M\in \mathcal{C}$, if $\ell\ell_{\alpha}^{\beta}(M)=n$, then $\ell\ell_{\alpha}^{\beta}(M)=\ell\ell_{\alpha}^{\beta}(\beta^{j}(M))+j$
for any $0 \leqslant j\leqslant n$; in particular, if $\ell\ell^{\alpha}(M)=n$, then $\ell\ell^{\alpha}(F_{\alpha}^{n}(M))=0$.
\end{lemma}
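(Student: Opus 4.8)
The statement is essentially a bookkeeping identity once one unwinds the definition of $\ell\ell_{\alpha}^{\beta}$, so the plan is to argue directly from $\ell\ell_{\alpha}^{\beta}(M)=\inf\{i\geqslant 0\mid \alpha\circ\beta^{i}(M)=0\}$. The first step is to spell out what the hypothesis $\ell\ell_{\alpha}^{\beta}(M)=n$ means: since the infimum of a nonempty subset of $\mathbb{N}$ is attained, it says precisely that $\alpha\circ\beta^{n}(M)=0$ while $\alpha\circ\beta^{i}(M)\neq 0$ for every $i$ with $0\leqslant i<n$ (the index set being nonempty is exactly what the finiteness of $\ell\ell_{\alpha}^{\beta}(M)$ provides; recall also the convention $\beta^{0}=\Id$, so for $n\geqslant 1$ the case $i=0$ reads $\alpha(M)\neq 0$).

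Next I would fix $j$ with $0\leqslant j\leqslant n$ and compute, using that powers of an endofunctor compose, i.e. $\beta^{i}\circ\beta^{j}=\beta^{i+j}$, that
\[
\ell\ell_{\alpha}^{\beta}(\beta^{j}(M))=\inf\{i\geqslant 0\mid \alpha\circ\beta^{i}(\beta^{j}(M))=0\}=\inf\{i\geqslant 0\mid \alpha\circ\beta^{i+j}(M)=0\}.
\]
For the upper bound, the value $i=n-j$ (which is $\geqslant 0$ by the assumption $j\leqslant n$) lies in this index set, because $\alpha\circ\beta^{(n-j)+j}(M)=\alpha\circ\beta^{n}(M)=0$; hence $\ell\ell_{\alpha}^{\beta}(\beta^{j}(M))\leqslant n-j$. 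For the lower bound, if $0\leqslant i<n-j$ then $0\leqslant i+j<n$, so $\alpha\circ\beta^{i+j}(M)\neq 0$ by the first step, and no such $i$ belongs to the index set; therefore $\ell\ell_{\alpha}^{\beta}(\beta^{j}(M))\geqslant n-j$. Combining the two bounds gives $\ell\ell_{\alpha}^{\beta}(\beta^{j}(M))=n-j$, which rearranges to $\ell\ell_{\alpha}^{\beta}(M)=\ell\ell_{\alpha}^{\beta}(\beta^{j}(M))+j$, as claimed.

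Finally, the ``in particular'' clause is the special case $\beta=F_{\alpha}$, for which $\ell\ell_{\alpha}^{\beta}=\ell\ell^{\alpha}$ by Definition \ref{def2.5}: if $\ell\ell^{\alpha}(M)=n$, applying the formula just proved with $j=n$ yields $\ell\ell^{\alpha}(F_{\alpha}^{n}(M))=n-n=0$. I do not expect any real obstacle here; the only points needing a little care are the nonemptiness of the index set (ensured by the hypothesis $\ell\ell_{\alpha}^{\beta}(M)=n$ being finite), the convention $\beta^{0}=\Id$, and the restriction $j\leqslant n$, which is precisely what makes $n-j$ a legitimate non-negative index — for $j>n$ the identity can fail, since $\alpha\circ\beta^{n}(M)=0$ need not force $\alpha\circ\beta^{j}(M)=0$ without further assumptions on $\beta$.
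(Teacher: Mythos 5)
Your proof is correct and is the standard direct unwinding of Definition \ref{def2.5} (the paper itself only cites \cite[Lemma 2.6]{zheng2020upper} without reproducing the argument, and the cited proof proceeds in essentially this way). The points you flag --- attainment of the infimum, nonemptiness of the index set, and the role of $j\leqslant n$ in keeping $n-j\geqslant 0$ --- are exactly the ones that need care, and you handle them correctly.
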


Recall that a {\bf torsion pair} (or {\bf torsion theory}) for $\mathcal{C}$
is a pair of classes $(\mathcal{T},\mathcal{F})$ of objects in $\mathcal{C}$ satisfying the following conditions.
\begin{enumerate}
\item[(1)] $\Hom_{\mathcal{C}}(M,N)=0$ for any $M\in\mathcal{T}$ and $N\in\mathcal{F}$;
\item[(2)] an object $X \in \mathcal{C}$ is in $\mathcal{T}$ if $\Hom_{\mathcal{C}}(X,-)|_{\mathcal{F}}=0$;
\item[(3)] an object $Y\in\mathcal{C}$ is in $\mathcal{F}$ if $\Hom_{\mathcal{C}}(-,Y)|_{\mathcal{T}}=0$.
\end{enumerate}

For a subfunctor $\alpha$ of $1_{\C}$, we write $q_{\alpha}:=1_{\mathcal{C}}/\alpha$.
Let $(\mathcal{T},\mathcal{F})$ be a torsion pair for $\mathcal{C}$.
Recall that the {\bf torsion radical} $t$ is a functor in $\End_{\mathbb{Z}}(\mathcal{C})$ such that
$$0 \longrightarrow  t(M)\longrightarrow M \longrightarrow  q_{t}(M)\longrightarrow 0$$
is a short exact sequence and $q_{t}(M)(=M/t(M))\in \mathcal{F}$.

\subsection{Some facts}

In this section, $\Lambda$ is an artin algebra. 
Then $\mod \Lambda$ is a length-category.
For a module $M$ in $\mod\Lambda$, we use $\rad M$ 
and $\top M$ to denote the radical, socle and top of $M$ respectively.
For a subclass $\mathcal{W}$ of $\mod \Lambda$, we use $\add \mathcal{W}$ 
to denote the subcategory
of $\mod \Lambda$ consisting of direct summands of
 finite direct sums of modules in $\mathcal{W}$,
and if $\mathcal{W}=\{M\}$ for some $M\in \mod \Lambda$,
 we write $\add M:=\add \mathcal{W}$.

 Let $\S^{\infty}$ be the set of the simple modules with
 infinite projective dimension and 
 $\S^{<\infty}$ be the set of the simple 
 module with finite projective dimension. 
Let $\mathcal{S}$ be the set of the simple modules in $\mod \Lambda$, and let $\mathcal{V}$ be a subset of $\mathcal{S}$
and $\mathcal{V}'$ the set of all the others simple modules in $\mod \Lambda$, that is, $\mathcal{V}'=\mathcal{S}\backslash\mathcal{V}$.
We write $\mathfrak{F}\,(\mathcal{V}):=\{M\in\mod\Lambda\mid$ there exists a finite chain
$$0=M_0\subseteq M_1\subseteq \cdots\subseteq M_m=M$$ of submodules of $M$
such that each quotient $M_i / M_{i-1}$ is isomorphic to some module in $\mathcal{V}\}$.
By \cite[Lemma 5.7 and Proposition 5.9]{huard2013layer}, we have that
$(\mathcal{T}_{\mathcal{V}}, \mathfrak{F}(\mathcal{V}))$ is a torsion pair, where
$$\mathcal{T}_{\mathcal{V}}=\{M \in \mod \Lambda\mid\top M\in \add \mathcal{V}'\}.$$
We use $t_{\mathcal{V}}$ to denote the torsion radical of the torsion pair $(\mathcal{T}_{\mathcal{V}}, \mathfrak{F}(\mathcal{V}))$.
Then $t_{\mathcal{V}}(M)\in \mathcal{T}_{\mathcal{V}}$ and $q_{_{t_{\mathcal{V}}}}(M)\in\mathfrak{F}(\mathcal{V})$ for any
$M\in \mod \Lambda$.

\subsection{Short exact sequences and radical layer length}

\begin{lemma}\label{lemma1}
For any module $X \in \mod\Lambda$. We have

$(1)$ $ t_{\V}(\La_{\La})$ is a two side ideal 
and $t_{\V}(X)=X t_{\V}(\La_{\La})$.

$(2)$ $\rad X=X \rad(\La_{\La})$.

$(3)$ $\rad t_{\V}(\La_{\La})
=t_{\V}(\La_{\La}) \rad(\La_{\La})$.

$(4)$ $t_{\V}\rad t_{\V}(\La_{\La})=t_{\V}(\La_{\La})
 \rad(\La_{\La})t_{\V}(\La_{\La}) $.

$(5)$ $t_{\V}F_{t_{\V}}^{i}(\La_{\La})$ is 
an ideal of $\La$ for each $i \geqslant 0 $.


\end{lemma}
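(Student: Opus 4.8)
The plan is to verify the five assertions in order, since each builds on the previous ones, and to reduce everything to elementary facts about the module-theoretic description of $\rad$ and $t_{\V}$ given in Section 2.3.

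For (1), the key input is the explicit formula $\mathcal{T}_{\V}=\{M\mid \top M\in\add\V'\}$ together with the fact that $t_{\V}(M)$ is the largest submodule of $M$ lying in $\mathcal{T}_{\V}$. First I would show $t_{\V}(\La_\La)$ is a right ideal (immediate, it is a submodule of $\La_\La$) and then that it is a left ideal: for $\lambda\in\La$, left multiplication $\lambda\cdot(-):\La_\La\to\La_\La$ is a right-module homomorphism, so it carries $t_{\V}(\La_\La)$, being in $\mathcal{T}_{\V}$ and hence having top in $\add\V'$, into a submodule whose top is a quotient of one in $\add\V'$, hence again in $\add\V'$; therefore $\lambda\,t_{\V}(\La_\La)\subseteq t_{\V}(\La_\La)$. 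For the equality $t_{\V}(X)=X\,t_{\V}(\La_\La)$: the inclusion $\supseteq$ holds because $X\,t_{\V}(\La_\La)$ is the image of $X\oo_\La t_{\V}(\La_\La)\to X$, i.e.\ a quotient of a direct sum of copies of $t_{\V}(\La_\La)$, so it lies in $\mathcal{T}_{\V}$ (which is closed under quotients and sums) and hence inside the largest such submodule $t_{\V}(X)$. For $\subseteq$, I would use that $q_{t_{\V}}$ is a radical: $X/X\,t_{\V}(\La_\La)\cong X\oo_\La(\La/t_{\V}(\La_\La))$ and $\La/t_{\V}(\La_\La)=q_{t_{\V}}(\La_\La)\in\mathfrak{F}(\V)$; since $\mathfrak{F}(\V)$ is the torsion-free class and is closed under submodules and extensions, a module of the form $X\oo_\La q_{t_{\V}}(\La_\La)$ is filtered by copies of $q_{t_{\V}}(\La_\La)$ and so lies in $\mathfrak{F}(\V)$, forcing $t_{\V}(X)\subseteq X\,t_{\V}(\La_\La)$.

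Part (2) is the standard identity $\rad X=X\rad\La$ for finitely generated modules over an artin algebra, which I would simply cite or prove in one line (Nakayama). Parts (3) and (4) are then formal consequences: (3) is (2) applied to the module $X=t_{\V}(\La_\La)$, giving $\rad\, t_{\V}(\La_\La)=t_{\V}(\La_\La)\,\rad\La$. For (4), apply (1) with $X=\rad\,t_{\V}(\La_\La)$ to get $t_{\V}\rad\,t_{\V}(\La_\La)=\big(\rad\,t_{\V}(\La_\La)\big)\,t_{\V}(\La_\La)$, and then substitute (3) to rewrite the first factor as $t_{\V}(\La_\La)\,\rad\La$, yielding $t_{\V}(\La_\La)\,\rad\La\,t_{\V}(\La_\La)$.

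For (5) I would argue by induction on $i$ using $F_{t_{\V}}=\rad\circ\, t_{\V}$. The base case $i=0$ is: $t_{\V}F_{t_{\V}}^0(\La_\La)=t_{\V}(\La_\La)$, an ideal by (1). For the inductive step, set $J_i:=t_{\V}F_{t_{\V}}^i(\La_\La)$ and assume $J_i$ is a two-sided ideal. Then $F_{t_{\V}}^{i+1}(\La_\La)=\rad\,t_{\V}(F_{t_{\V}}^i(\La_\La))=\rad(J_i')$ where $J_i'=t_{\V}F_{t_{\V}}^i(\La_\La)=J_i$; by (2), $\rad J_i=J_i\,\rad\La$, and by (1) applied to $X=J_i\rad\La$, $t_{\V}(J_i\rad\La)=(J_i\rad\La)\,t_{\V}(\La_\La)$, which as a product of (two-sided) ideals $J_i$, $\rad\La$, $t_{\V}(\La_\La)$ is again a two-sided ideal. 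The only subtlety worth spelling out — and the step I expect to demand the most care — is the identification $F_{t_{\V}}^{i}(\La_\La)=t_{\V}F_{t_{\V}}^{i-1}(\La_\La)$, i.e.\ making sure the functor $F_{t_{\V}}$ applied to the right regular module produces exactly the ideal $J_{i-1}$ and that applying $\rad$ to it agrees with right multiplication by $\rad\La$; once the bookkeeping of "$F_{t_{\V}}^i(\La_\La)$ is a two-sided ideal obtained as an iterated product of the fixed ideals $\rad\La$ and $t_{\V}(\La_\La)$" is set up cleanly, everything else is a routine application of (1) and (2).
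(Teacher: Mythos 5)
Your proof is correct and follows the same logical structure as the paper's: each part is derived from the previous ones in the same order, and (5) is handled by exactly the induction you describe. The only difference is that you supply direct arguments for (1) and (2), where the paper simply cites \cite[Proposition 5.9(c)]{huard2013layer} and \cite[Proposition 3.5]{auslander1997representation}.
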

\begin{proof}
$(1)$ See \cite[Proposition 5.9(c)]{huard2013layer}.

$(2)$ See \cite[Propostion 3.5]{auslander1997representation}.

$(3)$ Let $X=t_{\V}(\La_{\La})$ and By(2).

$(4)$ By (1)(3).

$(5)$ By (1)(2)(3)(4).
\end{proof}

\begin{lemma}\label{lemma2}
For any module $X\in \mod \Lambda$, we have
$t_{\V}F_{t_{\V}}^{i}(X)=X (t_{\V}F_{t_{\V}}^{i}(\La_{\La}))$ for each $i \geqslant 0 $.
\end{lemma}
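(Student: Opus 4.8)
The plan is to induct on $i$, turning every functorial expression into a product of the module $X$ with a two-sided ideal of $\La$ and then invoking the identities of Lemma \ref{lemma1} to commute the functors $\rad$ and $t_{\V}$ past such products. It is convenient to set $I_i := t_{\V}F_{t_{\V}}^{i}(\La_{\La})$; by Lemma \ref{lemma1}(5) this is a two-sided ideal of $\La$ for every $i\geqslant 0$, so each $X I_i$ is a well-defined submodule of $X$ and one may use associativity of the right $\La$-action freely. The goal then becomes the clean statement $t_{\V}F_{t_{\V}}^{i}(X)=X I_i$ for all $X\in\mod\La$ and all $i$.

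For the base case $i=0$ the functor $F_{t_{\V}}^{0}$ is the identity, so the assertion is just $t_{\V}(X)=X\,t_{\V}(\La_{\La})$, which is Lemma \ref{lemma1}(1). For the inductive step, assume $t_{\V}F_{t_{\V}}^{i}(Y)=Y I_i$ for all $Y$. Since $F_{t_{\V}}=\rad\circ t_{\V}$, I would first rewrite $F_{t_{\V}}^{i+1}(X)=\rad\bigl(t_{\V}F_{t_{\V}}^{i}(X)\bigr)=\rad(X I_i)$ using the induction hypothesis, and then apply Lemma \ref{lemma1}(2) to the module $X I_i$ to get $F_{t_{\V}}^{i+1}(X)=X I_i\,\rad(\La_{\La})$. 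Applying $t_{\V}$ and using Lemma \ref{lemma1}(1) gives $t_{\V}F_{t_{\V}}^{i+1}(X)=X I_i\,\rad(\La_{\La})\,t_{\V}(\La_{\La})$. Specializing this last identity to $X=\La_{\La}$ identifies $I_{i+1}=I_i\,\rad(\La_{\La})\,t_{\V}(\La_{\La})$, and therefore $t_{\V}F_{t_{\V}}^{i+1}(X)=X I_{i+1}$, closing the induction.

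There is no serious obstacle here; the work is entirely bookkeeping. The one point to keep straight is that $\rad$ and $t_{\V}$ in $\End_{\mathbb{Z}}(\mod\La)$ are computed \emph{intrinsically} on a submodule $N\subseteq X$, not relative to $X$, so that the formulas $\rad N=N\rad(\La_{\La})$ and $t_{\V}(N)=N\,t_{\V}(\La_{\La})$ of Lemma \ref{lemma1} apply verbatim; granting that, each inductive passage is a single application of Lemma \ref{lemma1}(1) or (2) followed by associativity of module multiplication, and Lemma \ref{lemma1}(5) guarantees all the intermediate objects are genuine ideals.
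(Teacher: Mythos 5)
Your proof is correct and follows the same basic strategy as the paper: induct on $i$, use the base case from Lemma~\ref{lemma1}(1), and in the inductive step repeatedly convert the functors $\rad$ and $t_{\V}$ into right multiplication by ideals via Lemma~\ref{lemma1}(1),(2). The only real difference is in how the inductive step is packaged: the paper writes $t_{\V}F_{t_{\V}}^{n+1}(X)=t_{\V}F_{t_{\V}}^{n}\bigl(F_{t_{\V}}(X)\bigr)$ and applies the induction hypothesis to the module $F_{t_{\V}}(X)$, then unpacks $F_{t_{\V}}(X)=\rad t_{\V}(X)$ and shuffles the resulting ideals (invoking Lemma~\ref{lemma1}(1)--(4) along the way); you instead apply the induction hypothesis to $X$ itself, getting $F_{t_{\V}}^{i+1}(X)=\rad(XI_i)=XI_i\rad(\La_\La)$ and then $t_{\V}F_{t_{\V}}^{i+1}(X)=XI_i\rad(\La_\La)t_{\V}(\La_\La)$, and identify the right-hand factor as $I_{i+1}$ by specializing to $X=\La_\La$. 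Your arrangement is a bit more economical — it uses only parts (1) and (2) of Lemma~\ref{lemma1} plus associativity, and in particular avoids the step in the paper's chain where a factor of $\rad(\La_\La)$ is moved from the left of $I_n$ to the inside of $\rad(-)$, which, read literally, is a left-sided use of the right-module identity $\rad Y=Y\rad(\La_\La)$. Both proofs establish the same statement; yours is slightly cleaner.
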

\begin{proof}
If $i=0$, by Lemma \ref{lemma1}(1).

Suppose that if $i=n$, we have $t_{\V}F_{t_{\V}}^{n}(X)=X (t_{\V}F_{t_{\V}}^{n}(\La_{\La}))$.

Now consider the case $i=n+1$.
\begin{align*}
t_{\V}F_{t_{\V}}^{n+1}(X) &=t_{\V}F_{t_{\V}}^{n}(F_{t_{\V}}(X))& \\
&=F_{t_{\V}}(X)t_{\V}F_{t_{\V}}^{n}(\La_{\La}) &\text{ (by assumption) }\\
&=\rad (t_{\V}(X))t_{\V}F_{t_{\V}}^{n}(\La_{\La})& \text{ (by }F_{t_{\V}}=\rad \circ t_{\V} ) \\
&=t_{\V}(X)\rad (\La_{\La})t_{\V}F_{t_{\V}}^{n}(\La_{\La})& \text{ (by Lemma \ref{lemma1}(2)}  )\\
&=t_{\V}(X)\rad (t_{\V}F_{t_{\V}}^{n}(\La_{\La}))&\text{ (by Lemma \ref{lemma1}(2)}  )\\
&=t_{\V}(X)F_{t_{\V}}^{n+1}(\La_{\La})& \text{ (by }F_{t_{\V}}=\rad \circ t_{\V} )\\
&=Xt_{\V }(\La_{\La}) F_{t_{\V}}^{n+1}(\La_{\La})&\text{ (by Lemma \ref{lemma1}(1)}  )\\
&=Xt_{\V } F_{t_{\V}}^{n+1}(\La_{\La})&\text{ (by Lemma \ref{lemma1}(3)(4)}  ).
\end{align*}
\end{proof}

\begin{lemma}{\rm (\cite[Lemma 3.3]{zheng2020upper})}\label{lemma3}
The functor $t_{\V}$ preserve monomorphism and epimorphism.
\end{lemma}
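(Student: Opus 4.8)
The plan is to prove the two assertions of Lemma~\ref{lemma3} --- that $t_{\V}$ preserves monomorphisms and that it preserves epimorphisms --- separately, using the explicit ideal description of $t_{\V}$ supplied by Lemma~\ref{lemma1}(1), namely $t_{\V}(X)=X\,t_{\V}(\La_{\La})$, together with the fact that $t_{\V}(\La_\La)$ is a two-sided ideal. Throughout, write $I:=t_{\V}(\La_{\La})$, so that $t_{\V}$ is, as a functor on $\mod\La$, simply $X\mapsto XI$ with the obvious action on morphisms (restriction, since $(XI)f\subseteq (Xf)I\subseteq YI$ for any $f\colon X\to Y$).

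First I would handle monomorphisms. Let $f\colon X\hookrightarrow Y$ be injective. Since $t_{\V}(f)$ is the restriction of $f$ to the submodule $XI=t_{\V}(X)$ of $X$, and a restriction of an injective map is injective, $t_{\V}(f)$ is a monomorphism. This direction is essentially immediate and should be stated in one or two lines; the only thing to remark is that the target of $t_{\V}(f)$ is indeed $YI=t_{\V}(Y)$, which follows because $f$ carries $XI$ into $YI$ by the ideal property and into $f(X)I$ inside $Y$.

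The substantive direction is epimorphisms. Let $g\colon X\twoheadrightarrow Y$ be surjective; I must show $t_{\V}(g)\colon XI\to YI$ is surjective, i.e.\ $g(XI)=YI$. One inclusion is clear. For the reverse, take $y\in YI$, so $y=\sum_k y_k a_k$ with $y_k\in Y$, $a_k\in I$; lift each $y_k$ to $x_k\in X$ with $g(x_k)=y_k$ (possible since $g$ is onto), and then $x:=\sum_k x_k a_k\in XI$ satisfies $g(x)=\sum_k g(x_k)a_k=\sum_k y_k a_k=y$, using that $g$ is a $\La$-module homomorphism. Hence $g(XI)=YI$ and $t_{\V}(g)$ is onto. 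I expect this bookkeeping with the module action to be the only place requiring care, and it is routine once the ideal description from Lemma~\ref{lemma1}(1) is in hand; there is no genuine obstacle. (Alternatively one can argue homologically: applying the right-exact functor $-\oo_\La I$ or, more precisely, using that $t_{\V}$ sits in the exact sequence $0\to t_{\V}\to 1_{\mod\La}\to q_{t_{\V}}\to 0$ of functors together with the snake lemma, but the direct argument above is shorter and self-contained.)
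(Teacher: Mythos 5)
Your proof is correct and self-contained; the paper itself offers no argument for this lemma (it simply quotes \cite[Lemma 3.3]{zheng2020upper}), and your route through the ideal description $t_{\V}(X)=X\,t_{\V}(\La_{\La})$ from Lemma \ref{lemma1}(1) is the natural one given the tools set up here. Both halves check out --- the restriction of an injection to the submodule $t_{\V}(X)$ is injective, and for surjectivity you correctly lift the elements $y_k$ and use $\La$-linearity of $g$ --- though the parenthetical appeal to $-\otimes_{\La}t_{\V}(\La_{\La})$ is best deleted, since $X\otimes_{\La}I\to XI$ need not be an isomorphism and the snake-lemma variant does not close without further input; the direct element argument you give is the one to keep.
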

\begin{lemma}\label{lemma4}
The functor $\rad $ preserve monomorphism and epimorphism.
\end{lemma}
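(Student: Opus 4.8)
The plan is to reduce everything to the description $\rad X = X\,\rad(\La_{\La})$ supplied by Lemma \ref{lemma1}(2), together with the elementary observation that $\rad$ is a subfunctor of the identity functor $1_{\mod \La}$, so that for a morphism $f\colon M\to N$ the induced morphism $\rad(f)$ is nothing but the (co)restriction of $f$ to the submodules $\rad M\subseteq M$ and $\rad N\subseteq N$. Concretely, for any $\La$-homomorphism $f\colon M\to N$ one has $f(\rad M)=f(M\,\rad(\La_{\La}))=f(M)\,\rad(\La_{\La})\subseteq N\,\rad(\La_{\La})=\rad N$, which both shows that $\rad(f)$ is well defined and provides the mechanism for the two assertions.

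For monomorphisms the claim is immediate: if $f\colon M\to N$ is injective, then its restriction to any submodule of $M$ is injective, in particular $\rad(f)=f|_{\rad M}\colon \rad M\to \rad N$ is injective, i.e. $\rad(f)$ is a monomorphism. (This half in fact holds for every subfunctor of $1_{\mod\La}$, by factoring $\rad M\hookrightarrow M\xrightarrow{f} N$ through $\rad N\hookrightarrow N$.)

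For epimorphisms, suppose $f\colon M\to N$ is surjective, so $f(M)=N$. Then, applying Lemma \ref{lemma1}(2) to both $M$ and $N$, we get $\rad(f)(\rad M)=f(M\,\rad(\La_{\La}))=f(M)\,\rad(\La_{\La})=N\,\rad(\La_{\La})=\rad N$, so $\rad(f)$ is surjective. The only point worth flagging is that this step genuinely relies on the explicit formula $\rad X=X\,\rad(\La_{\La})$: a general subfunctor of the identity preserves monomorphisms but need not preserve epimorphisms, and it is precisely the fact that $\rad$ is realized as multiplication by the two-sided ideal $\rad(\La_{\La})$ that makes the epimorphism case go through. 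There is no real obstacle here; the argument is the verbatim analogue of the one for $t_{\V}$ in Lemma \ref{lemma3}, with the ideal $t_{\V}(\La_{\La})$ of Lemma \ref{lemma1}(1) replaced by $\rad(\La_{\La})$ of Lemma \ref{lemma1}(2).
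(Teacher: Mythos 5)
Your proof is correct, but it takes a different route from the paper: the paper does not argue at all, it simply cites the literature (the monomorphism half to \cite[Lemma 3.6(a)]{huard2013layer} and the epimorphism half to \cite[Chapter V, Lemma 1.1]{assem2006elements}), whereas you give a short self-contained argument. Your two halves are exactly the standard proofs underlying those references: the monomorphism case uses only that $\rad$ is a subfunctor of the identity, and the epimorphism case uses the identification $\rad X = X\,\rad(\La_{\La})$ from Lemma \ref{lemma1}(2) together with $f(M\,\rad(\La_{\La}))=f(M)\,\rad(\La_{\La})$ for a module homomorphism $f$. Your remark that preservation of epimorphisms is \emph{not} automatic for an arbitrary subfunctor of the identity, and that it is the realization of $\rad$ as multiplication by a two-sided ideal that saves the day, is a worthwhile clarification that the paper's bare citations suppress; it also makes transparent why the same argument works verbatim for $t_{\V}$ in Lemma \ref{lemma3} and for the composites in Lemma \ref{lemma5}. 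The only cost of your approach is redundancy with the cited sources; the benefit is that the paper becomes self-contained at this point.
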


\begin{proof}
Note that $\rad$ preserve monomorphism
(see \cite[Lemma 3.6(a)]{huard2013layer}) and
 epimorphism(see \cite[Chapter V, Lemma 1.1]{assem2006elements}).
\end{proof}

\begin{lemma}\label{lemma5}
For each $i \geqslant 0$,
$F_{t_{\V}}^{i}=\rad \circ t_{\V}$ and  $t_{\V}F_{t_{\V}}^{i}=\rad \circ t_{\V}$ preserve monomorphism and epimorphism.
\end{lemma}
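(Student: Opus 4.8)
The plan is to reduce everything to the two exactness facts already in hand: Lemma \ref{lemma3} (the torsion radical $t_{\V}$ preserves monomorphisms and epimorphisms) and Lemma \ref{lemma4} (the radical functor $\rad$ preserves monomorphisms and epimorphisms). First I would recall that $F_{t_{\V}} = \rad \circ t_{\V}$ by definition (Definition \ref{def2.5}), so $F_{t_{\V}}$, being a composite of two functors each of which preserves monomorphisms and each of which preserves epimorphisms, preserves both. Next I would handle the iterates by a trivial induction on $i$: if $F_{t_{\V}}^{i}$ preserves monomorphisms (resp.\ epimorphisms), then so does $F_{t_{\V}}^{i+1} = F_{t_{\V}} \circ F_{t_{\V}}^{i}$, again as a composite of two such functors; the base case $i=0$ is the identity functor, which trivially preserves both, and $i=1$ is the observation just made.

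For the functor $t_{\V}F_{t_{\V}}^{i}$, I would note that it factors as $t_{\V} \circ (F_{t_{\V}}^{i})$, so once $F_{t_{\V}}^{i}$ is known to preserve monomorphisms and epimorphisms (from the previous paragraph) and $t_{\V}$ does so by Lemma \ref{lemma3}, the composite does too. Alternatively, and this is presumably the reading the statement intends with its assertion ``$F_{t_{\V}}^{i}=\rad \circ t_{\V}$ and $t_{\V}F_{t_{\V}}^{i}=\rad \circ t_{\V}$'', one can observe that for $i \geqslant 1$ one has $F_{t_{\V}}^{i} = \rad \circ (t_{\V} F_{t_{\V}}^{i-1})$ and $t_{\V}F_{t_{\V}}^{i} = \rad \circ (\rad \circ t_{\V} F_{t_{\V}}^{i-1})$ after using Lemma \ref{lemma1} to commute $t_{\V}$ past $\rad$ on the ideals involved; but the cleanest route is simply the composition-of-functors argument and I would present that.

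I do not anticipate a genuine obstacle here; the only thing to be careful about is the bookkeeping for $i=0$, where $t_{\V}F_{t_{\V}}^{0} = t_{\V}$ (which preserves monos and epis by Lemma \ref{lemma3}) and $F_{t_{\V}}^{0}$ should be read as the identity. I would state the induction cleanly so that the base case is unambiguous, and then invoke the general fact that a composite $G \circ H$ of additive functors preserves monomorphisms whenever both $G$ and $H$ do (and likewise for epimorphisms), which follows immediately from functoriality applied to the relevant short exact sequences. That completes the argument in a few lines.
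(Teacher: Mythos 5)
Your proposal is correct and follows the same route as the paper, whose entire proof is the one-line citation of Lemma \ref{lemma3} and Lemma \ref{lemma4}; you simply spell out the composition-of-functors induction that the paper leaves implicit (and you rightly note that the displayed equalities in the statement are a typo, holding only for $i=1$).
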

\begin{proof}
By Lemma \ref{lemma3} and Lemma \ref{lemma4}.
\end{proof}

By Definition \ref{def2.5}, we have the following observation.
\begin{lemma}\label{lemma6}
For any module $X\in \mod \Lambda$,
we have
$t_{\V}F_{t_{\V}}^{\ell\ell^{t_{\V}}(X)}(X)=0.$
\end{lemma}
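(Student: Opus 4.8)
The plan is to read the statement off directly from Definition \ref{def2.5}, the only genuine point being to observe that $\ell\ell^{t_{\V}}(X)$ is finite, so that the infimum defining it is actually attained.

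First I would unwind the definitions. Taking $\alpha=t_{\V}$ and $\beta=F_{t_{\V}}$ in Definition \ref{def2.5}, recall $F_{t_{\V}}=\rad\circ t_{\V}$ and
$\ell\ell^{t_{\V}}(X)=\ell\ell_{t_{\V}}^{F_{t_{\V}}}(X)=\inf\{\,i\geqslant 0\mid t_{\V}F_{t_{\V}}^{i}(X)=0\,\}$. Set $S:=\{\,i\geqslant 0\mid t_{\V}F_{t_{\V}}^{i}(X)=0\,\}$; the claim is precisely that the least element of $S$ lies in $S$, once we know $S\neq\varnothing$.

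Next I would check $S\neq\varnothing$. Since $t_{\V}(M)$ is a submodule of $M$ for every $M$ and $\rad$ sends submodules to submodules, one gets $F_{t_{\V}}(M)=\rad(t_{\V}(M))\subseteq\rad(M)$, and hence by induction $F_{t_{\V}}^{i}(X)\subseteq\rad^{i}(X)$ for all $i\geqslant 0$. As $\Lambda$ is an artin algebra, $\mod\Lambda$ is a length-category and $\rad^{\LL(\Lambda)}(X)=0$; therefore $F_{t_{\V}}^{\LL(\Lambda)}(X)=0$, so $t_{\V}F_{t_{\V}}^{\LL(\Lambda)}(X)=0$ and $\LL(\Lambda)\in S$. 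In particular $n:=\ell\ell^{t_{\V}}(X)<\infty$.

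Finally, since $S$ is a nonempty set of nonnegative integers it has a least element, which by definition is $n=\ell\ell^{t_{\V}}(X)$; thus $n\in S$, that is, $t_{\V}F_{t_{\V}}^{\,\ell\ell^{t_{\V}}(X)}(X)=0$, as required. (If one prefers, one may also note that $S$ is upward closed, since $t_{\V}F_{t_{\V}}^{i}(X)=0$ forces $F_{t_{\V}}^{i+1}(X)=\rad\big(t_{\V}F_{t_{\V}}^{i}(X)\big)=0$ and hence $t_{\V}F_{t_{\V}}^{i+1}(X)=0$, but this is not needed for the argument.) There is essentially no obstacle here: the content is just the standard fact that an infimum over a nonempty set of integers is attained, the one thing to be careful about being the finiteness of $\ell\ell^{t_{\V}}(X)$.
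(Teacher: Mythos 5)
Your proof is correct and matches the paper's approach: the paper states this lemma as an immediate observation from Definition \ref{def2.5} and gives no proof at all. The one substantive point you add --- verifying that $\ell\ell^{t_{\V}}(X)$ is finite via the inclusion $F_{t_{\V}}^{i}(X)\subseteq\rad^{i}(X)$ and the nilpotence of the radical, so that the defining infimum is actually attained --- is precisely the detail the paper leaves implicit, and you argue it correctly.
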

Now, we give the main theorem in this paper.
\begin{theorem}\label{theorem1-radical-shortexact}
Let $0  \longrightarrow L\longrightarrow  M\longrightarrow N\longrightarrow 0$
be an exact sequence in $\mod \La$.
Then $$\max\{\ell\ell^{t_{\V}}(L), \ell\ell^{t_{\V}}(N)\}  \leqslant \ell\ell^{t_{\V}}(M) \leqslant \ell\ell^{t_{\V}}(L)+\ell\ell^{t_{\V}}(N).$$
In particular, if
 $\ell\ell^{t_{\V}}(L)=0,$
 then
 $  \ell\ell^{t_{\V}}(N)=\ell\ell^{t_{\V}}(M) $;
  if
 $\ell\ell^{t_{\V}}(N)=0,$
 then
 $  \ell\ell^{t_{\V}}(L)=\ell\ell^{t_{\V}}(M). $

\end{theorem}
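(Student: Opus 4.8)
The plan is to exploit the explicit module-theoretic description of the iterated radical-layer functors given in Lemma \ref{lemma2}, namely $t_{\V}F_{t_{\V}}^{i}(X)=X\bigl(t_{\V}F_{t_{\V}}^{i}(\La_{\La})\bigr)$, together with the exactness properties recorded in Lemma \ref{lemma5}. Write $I_i:=t_{\V}F_{t_{\V}}^{i}(\La_{\La})$, so that $t_{\V}F_{t_{\V}}^{i}(X)=XI_i$ for every $X$ and every $i\geqslant 0$. By Definition \ref{def2.5} and Lemma \ref{lemma6}, $\ell\ell^{t_{\V}}(X)$ is exactly the least $n$ with $XI_n=0$. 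So the whole statement reduces to a statement about how the submodules $LI_i$, $MI_i$, $NI_i$ of a short exact sequence interact.

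First I would prove the left-hand inequality $\max\{\ell\ell^{t_{\V}}(L),\ell\ell^{t_{\V}}(N)\}\leqslant\ell\ell^{t_{\V}}(M)$. Set $m=\ell\ell^{t_{\V}}(M)$, so $MI_m=0$. Since $L\subseteq M$, clearly $LI_m\subseteq MI_m=0$, giving $\ell\ell^{t_{\V}}(L)\leqslant m$; alternatively this follows because $t_{\V}F_{t_{\V}}^{m}$ preserves monomorphisms (Lemma \ref{lemma5}), so $t_{\V}F_{t_{\V}}^{m}(L)\hookrightarrow t_{\V}F_{t_{\V}}^{m}(M)=0$. Dually, applying the epimorphism-preserving functor $t_{\V}F_{t_{\V}}^{m}$ to $M\twoheadrightarrow N$ gives $t_{\V}F_{t_{\V}}^{m}(M)\twoheadrightarrow t_{\V}F_{t_{\V}}^{m}(N)$, hence $t_{\V}F_{t_{\V}}^{m}(N)=0$ and $\ell\ell^{t_{\V}}(N)\leqslant m$.

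Next, the right-hand inequality $\ell\ell^{t_{\V}}(M)\leqslant\ell\ell^{t_{\V}}(L)+\ell\ell^{t_{\V}}(N)$. Put $a=\ell\ell^{t_{\V}}(L)$ and $b=\ell\ell^{t_{\V}}(N)$; I must show $MI_{a+b}=0$. Here is where the ideal description is essential: from $t_{\V}F_{t_{\V}}^{b}(N)=NI_b=0$ and the epimorphism $M\twoheadrightarrow N$ I get $MI_b\subseteq L$ (viewing $L$ as the kernel submodule of $M$). Now apply $(-)I_a$: since the $I_i$ behave well under the module structure, one checks $M I_{a+b}=(MI_b)I_a\subseteq LI_a = t_{\V}F_{t_{\V}}^{a}(L)=0$. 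The one technical point needing care is the identity $MI_{a+b}=(MI_b)I_a$, i.e. that the iterated functor factors as a product of the two-sided ideals; this should come from Lemma \ref{lemma2} applied with $X=MI_b$ or directly from the multiplicativity of the $I_i$ established via Lemma \ref{lemma1}(3)(4)(5). I expect \emph{this factorization} to be the main obstacle: one must verify $I_{a+b}=I_a\cdot(\text{something})$ or more precisely that $(X I_b)I_a=XI_{a+b}$ as submodules, which requires unwinding the definition $F_{t_{\V}}=\rad\circ t_{\V}$ and the fact that each $I_i$ is a two-sided ideal contained in $I_b\,\rad(\La_{\La})^{\,a}$-type expressions.

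Finally, the two ``in particular'' statements are immediate corollaries: if $\ell\ell^{t_{\V}}(L)=0$ then the two displayed inequalities sandwich $\ell\ell^{t_{\V}}(M)$ between $\ell\ell^{t_{\V}}(N)$ and $\ell\ell^{t_{\V}}(N)$, forcing equality; symmetrically if $\ell\ell^{t_{\V}}(N)=0$. No extra work is needed there.
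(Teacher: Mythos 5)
Your proposal is correct and follows essentially the same route as the paper: the left inequality via preservation of monomorphisms and epimorphisms by $t_{\V}F_{t_{\V}}^{m}$, and the right inequality by first showing $t_{\V}F_{t_{\V}}^{\ell\ell^{t_{\V}}(N)}(M)\subseteq L$ (via $M/L\cong N$ and Lemma~\ref{lemma2}) and then killing this submodule inside $L$. The factorization you flag as the ``main obstacle,'' namely $(XI_b)I_a=XI_{a+b}$, is exactly the small step the paper disposes of using the idempotency $t_{\V}^{2}=t_{\V}$, which yields $t_{\V}F_{t_{\V}}^{a}\circ t_{\V}F_{t_{\V}}^{b}=t_{\V}F_{t_{\V}}^{a+b}$ (since $F_{t_{\V}}^{a}\circ t_{\V}=F_{t_{\V}}^{a}$ for $a\geqslant 1$ because $F_{t_{\V}}=\rad\circ t_{\V}$), after which Lemma~\ref{lemma2} translates this functor identity into your ideal identity; so your worry is easily settled and no genuinely different argument is required.
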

\begin{proof}

By Lemma \ref{lemma5}, we know that $F_{t_{\V}}=\rad \circ\; t_{\V}$  preserve monomorphism and epimorphism.
Thus by \cite[Lemma 3.4(b)(c)]{huard2013layer}, we can obtain
that $\ell\ell^{t_{\V}}(L) \leqslant \ell\ell^{t_{\V}}(M) $ and
 $\ell\ell^{t_{\V}}(N) \leqslant \ell\ell^{t_{\V}}(M) $,
that is,
$$\max\{\ell\ell^{t_{\V}}(L), \ell\ell^{t_{\V}}(N)\}  \leqslant \ell\ell^{t_{\V}}(M).$$

Next, we will prove the second `$ \leqslant$'.

By Lemma \ref{lemma1}(5), we know that $t_{\V}F_{t_{\V}}^{i}(\La_{\La})$ is an ideal of $\La$ for each $i \geqslant 0 $.
By assumption, we have $M/L\cong N.$
Moreover, we get
 \begin{align*}
 ( t_{\V}F_{t_{\V}}^{\ell\ell^{t_{\V}}(N)}(M)+L)/L
  &= (M( t_{\V}F_{t_{\V}}^{\ell\ell^{t_{\V}}(N)}(\La_{\La}))+L)/L&\text{ (by Lemma \ref{lemma2}})\\
 &= (M/L)(t_{\V}F_{t_{\V}}^{\ell\ell^{t_{\V}}(N)}(\La_{\La})) &\\
 &\cong N(t_{\V}F_{t_{\V}}^{\ell\ell^{t_{\V}}(N)}(\La_{\La})) & \text{ (by} M/L\cong N)\\
&=t_{\V}F_{t_{\V}}^{\ell\ell^{t_{\V}}(N)}(N)&\text{ (by Lemma \ref{lemma2}})\\
&=0&\text{ (by Lemma \ref{lemma6}}).
\end{align*}

That is, $ t_{\V} F_{t_{\V}}^{\ell\ell^{t_{\V}}(N)}(M)+L=L$. Moreover, $ t_{\V}F_{t_{\V}}^{\ell\ell^{t_{\V}}(N)}(M)\subseteq L$.
And by Lemma \ref{lemma5} and Lemma \ref{lemma6}, we have
 $$t_{\V}F_{t_{\V}}^{\ell\ell^{t_{\V}}(L)}(t_{\V}F_{t_{\V}}^{\ell\ell^{t_{\V}}(N)}(M))
\subseteq t_{\V}F_{t_{\V}}^{\ell\ell^{t_{\V}}(L)}(L)=0,$$
where we use the fact that $t_{\V}$ is idempotent, that is, $t^{2}_{\V}=t_{\V}$.
That is,
$$ t_{\V} F_{t_{\V}}^{\ell\ell^{t_{\V}}(L)+\ell\ell^{t_{\V}}(N)}(M)
=t_{\V}F_{t_{\V}}^{\ell\ell^{t_{\V}}(L)}(t_{\V}F_{t_{\V}}^{\ell\ell^{t_{\V}}(N)}(M))
\subseteq t_{\V}F_{t_{\V}}^{\ell\ell^{t_{\V}}(L)}(L)=0.$$
Thus,
$ \ell\ell^{t_{\V}}(M) \leqslant\ell\ell^{t_{\V}}(L) + \ell\ell^{t_{\V}}(N)$ by Definition \ref{def2.5}.
\end{proof}
\begin{remark}{\rm
Note that the functions Loewy length  $\LL$ and infinite layer length $\ell\ell^{\infty}$ are particular radical layer length, more details see \cite{huard2013layer,huard2009finitistic}. Corollary \ref{coro1}(1) is a classical result. The first ``$\leqslant$'' in Corollary \ref{coro1}(2)
is first established in \cite[Proposition 4.5(a)(b)]{huard2009finitistic}.
}
\end{remark}
\begin{corollary}\label{coro1}
Let $0  \longrightarrow L\longrightarrow  M\longrightarrow N\longrightarrow 0$
be an exact sequence in $\mod \La$.
Then

$(1)$ $\max\{\LL(L), \LL(N)\}  \leqslant \LL(M) \leqslant \LL(L)+\LL(N).$

$(2)$ $\max\{\ell\ell^{\infty}(L), \ell\ell^{\infty}(N)\}  \leqslant \ell\ell^{\infty}(M) \leqslant \ell\ell^{\infty}(L)+\ell\ell^{\infty}(N).$

$(3)$ if $\ell\ell^{\infty}(L)=0,$
 then
 $  \ell\ell^{\infty}(N)=\ell\ell^{\infty}(M) $;
  if
 $\ell\ell^{\infty}(N)=0,$
 then
 $  \ell\ell^{\infty}(L)=\ell\ell^{\infty}(M). $

\end{corollary}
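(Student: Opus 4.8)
The plan is to obtain Corollary \ref{coro1} as a direct specialisation of Theorem \ref{theorem1-radical-shortexact}: as the Remark records (see \cite{huard2013layer,huard2009finitistic}), both the Loewy length $\LL$ and the infinite layer length $\ell\ell^{\infty}$ are particular radical layer lengths $\ell\ell^{t_{\V}}$, so it suffices to feed the right set $\V$ of simple modules into the theorem and read off the conclusion.

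First I would treat (1). Take $\V=\varnothing$, so that $\V'=\S$. Then $\mathcal{T}_{\V}=\{M\in\mod\Lambda\mid \top M\in\add\S\}=\mod\Lambda$ and $\mathfrak{F}(\varnothing)=0$, so the torsion pair $(\mathcal{T}_{\V},\mathfrak{F}(\V))$ of the ``Some facts'' subsection is $(\mod\Lambda,0)$ and its torsion radical is the identity functor $t_{\V}=1_{\mod\Lambda}$. Hence $F_{t_{\V}}=\rad\circ 1_{\mod\Lambda}=\rad$, and for every $M$ Definition \ref{def2.5} gives $\ell\ell^{t_{\V}}(M)=\inf\{i\geqslant 0\mid \rad^{i}M=0\}=\LL(M)$. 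Applying Theorem \ref{theorem1-radical-shortexact} with this $\V$ yields precisely the chain of inequalities in (1).

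For (2) and (3) I would take $\V=\S^{<\infty}$, the set of simple modules of finite projective dimension; by the definition of the infinite layer length in \cite{huard2009finitistic,huard2013layer} one has $\ell\ell^{\infty}=\ell\ell^{t_{\S^{<\infty}}}$, i.e. $\ell\ell^{\infty}$ is the radical layer length attached to the torsion pair $(\mathcal{T}_{\S^{<\infty}},\mathfrak{F}(\S^{<\infty}))$. With this identification the two inequalities of Theorem \ref{theorem1-radical-shortexact} become (2) and its ``in particular'' clause becomes (3). The only point needing care is the two bookkeeping identifications $\LL=\ell\ell^{t_{\varnothing}}$ and $\ell\ell^{\infty}=\ell\ell^{t_{\S^{<\infty}}}$ -- the first immediate from the displayed descriptions of $\mathcal{T}_{\V}$ and $\mathfrak{F}(\V)$, the second a matter of matching the torsion-pair presentation used here with the original definition in \cite{huard2009finitistic,huard2013layer}; once these are in place there is no further obstacle, since everything reduces to reading Theorem \ref{theorem1-radical-shortexact} off for two concrete choices of $\V$. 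Alternatively one could reprove (1)--(3) from scratch by rerunning the argument of Theorem \ref{theorem1-radical-shortexact} with $t_{\V}$ replaced by $1_{\mod\Lambda}$, respectively by the infinite-torsion radical, but invoking the theorem is cleaner.
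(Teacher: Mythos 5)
Your proposal is correct and follows essentially the same route as the paper: parts (1) and (2) are obtained by specializing Theorem \ref{theorem1-radical-shortexact} to the choices $\V=\varnothing$ (where $t_{\V}=1_{\mod\Lambda}$, so $\ell\ell^{t_{\V}}=\LL$) and $\V=\S^{<\infty}$ (where $\ell\ell^{t_{\V}}=\ell\ell^{\infty}$, cf.\ \cite[Example~5.8(1)]{huard2013layer}), and (3) follows from the resulting inequalities. The only cosmetic difference is that you cite the ``in particular'' clause of the theorem for (3), while the paper re-derives it from (2) in one line, which amounts to the same thing.
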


\begin{proof}
(1)(2) are particular cases of Theorem \ref{theorem1-radical-shortexact}.

(3) if $\ell\ell^{\infty}(L)=0,$ by (2), we have
$$\ell\ell^{\infty}(N)=\max\{\ell\ell^{\infty}(L), \ell\ell^{\infty}(N)\}  \leqslant \ell\ell^{\infty}(M) \leqslant \ell\ell^{\infty}(L)+\ell\ell^{\infty}(N)=\ell\ell^{\infty}(N),$$
that is, $\ell\ell^{\infty}(N)=\ell\ell^{\infty}(N).$
Similarly,
 if
 $\ell\ell^{\infty}(N)=0,$
 then
 $  \ell\ell^{\infty}(L)=\ell\ell^{\infty}(M). $

\end{proof}

\section{Main results}

\begin{lemma}{\rm (\cite[Lemma 3.6]{huard2008An})}\label{lem-3}
Let $0  \longrightarrow X \longrightarrow Y \longrightarrow Z \longrightarrow 0$ be an exact sequence in $\mod A$.
Then we have the
following:

$(1)$ if $\pd Z$ is finite then, for any $m$ with $\pd Z\leqslant m$, there are projective $A$-modules
$P_{m}$ and $P_{m}'$
 such that
$\Omega^{m}(X)\oplus P_{m}\cong  \Omega^{m}(Y)\oplus P'_{m}$

$(2)$ if $\pd X$ is finite then, for any $m$ with $\pd X\leqslant m$, there are projective $A$-modules
$P_{m}$ and $P_{m}'$
 such that
$\Omega^{m+1}(Y)\oplus P_{m}\cong \Omega^{m+1}(Z)\oplus P'_{m}$

\end{lemma}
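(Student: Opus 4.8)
The plan is to deduce both statements from two classical tools: the horseshoe lemma (which splices projective resolutions of the two ends of a short exact sequence into a resolution of the middle term) and the generalized Schanuel lemma (over an artin algebra, any two $m$-th syzygies of a fixed module agree up to projective direct summands). Throughout, ``$\Omega^{m}$'' is understood via a \emph{minimal} projective resolution; the projectives $P_{m},P_{m}'$ in the statement are exactly the correction terms needed to reconcile minimal syzygies with syzygies computed from non-minimal resolutions.

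For $(1)$, fix a minimal projective resolution $P_{\bullet}\to X$ and a \emph{finite} projective resolution $Q_{\bullet}\to Z$ of length $\pd Z\leqslant m$, so $Q_{i}=0$ for $i>\pd Z$. The horseshoe lemma produces a projective resolution $R_{\bullet}\to Y$ with $R_{i}=P_{i}\oplus Q_{i}$ and, by the snake lemma applied degreewise, a short exact sequence of $i$-th syzygies $0\to\Omega^{i}_{P}(X)\to\Omega^{i}_{R}(Y)\to\Omega^{i}_{Q}(Z)\to 0$ for every $i\geqslant 0$. At $i=m$ the term $\Omega^{m}_{Q}(Z)$ is projective (it is $0$ when $\pd Z<m$, and isomorphic to $Q_{\pd Z}$ when $\pd Z=m$, since the last map of a finite resolution is injective), so the sequence splits: $\Omega^{m}_{R}(Y)\cong\Omega^{m}_{P}(X)\oplus\Omega^{m}_{Q}(Z)$. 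Since $P_{\bullet}$ is minimal, $\Omega^{m}_{P}(X)=\Omega^{m}(X)$, while the generalized Schanuel lemma gives projectives $A,A'$ with $\Omega^{m}_{R}(Y)\oplus A\cong\Omega^{m}(Y)\oplus A'$. Combining the displayed isomorphisms and gathering all projective summands onto the two sides yields $\Omega^{m}(X)\oplus P_{m}\cong\Omega^{m}(Y)\oplus P_{m}'$ with $P_{m},P_{m}'$ projective.

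For $(2)$, I reduce to $(1)$. Let $\pi\colon P\to Y$ be a projective cover, so $0\to\Omega^{1}(Y)\to P\to Y\to 0$ is exact with $\Omega^{1}(Y)=\ker\pi$; composing $P\to Y$ with $Y\to Z$ gives a surjection $P\to Z$, and putting $K:=\ker(P\to Z)$ we get $0\to K\to P\to Z\to 0$, exhibiting $K$ as a (generally non-minimal) first syzygy of $Z$, so $K\oplus B\cong\Omega^{1}(Z)\oplus B'$ for projectives $B,B'$ by Schanuel. Moreover $\Omega^{1}(Y)=\ker\pi\subseteq K$ and $K/\Omega^{1}(Y)\cong\ker(Y\to Z)=X$, giving an exact sequence $0\to\Omega^{1}(Y)\to K\to X\to 0$ whose last term satisfies $\pd X\leqslant m$. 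Applying part $(1)$ to this sequence at the index $m$ produces projectives with $\Omega^{m}(\Omega^{1}(Y))\oplus(\text{proj})\cong\Omega^{m}(K)\oplus(\text{proj})$. Because $\pi$ is a projective cover, $\Omega^{m}(\Omega^{1}(Y))=\Omega^{m+1}(Y)$; and taking $m$-th syzygies of $K\oplus B\cong\Omega^{1}(Z)\oplus B'$ (using additivity of $\Omega^{m}$ and that syzygies of projectives vanish in positive degree, with the trivial case $m=0$ handled directly) shows $\Omega^{m}(K)$ and $\Omega^{m+1}(Z)$ differ by projective summands. Substituting and again absorbing projectives gives $\Omega^{m+1}(Y)\oplus P_{m}\cong\Omega^{m+1}(Z)\oplus P_{m}'$, as required.

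The only genuine care needed — and the main (purely bookkeeping) obstacle — is keeping the ``up to projective summands'' ambiguity under control: every passage between a non-minimal syzygy and the minimal $\Omega^{m}$ introduces a pair of projective correction terms, and one must verify these can all be collected into the single pair $P_{m},P_{m}'$ on each side. If one instead worked in the stable module category $\underline{\mod A}$ (defining syzygies only up to projective summands), part $(1)$ would be immediate from the horseshoe lemma and part $(2)$ immediate from the two short exact sequences above, with no correction terms at all. No other step is delicate: the finiteness hypotheses on $\pd Z$ (respectively $\pd X$) are used only to force the relevant syzygy $\Omega^{m}_{Q}(Z)$ (respectively $\Omega^{m+1}_{P}(X)=0$) to be projective, which is precisely what makes the horseshoe syzygy sequence split.
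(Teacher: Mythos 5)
Your proof is correct. Note that the paper does not prove this lemma at all --- it is quoted verbatim from Huard--Lanzilotta--Mendoza \cite[Lemma 3.6]{huard2008An} --- and your argument (horseshoe lemma plus generalized Schanuel for part (1), then reduction of part (2) to part (1) via the pullback kernel $K$ over a projective cover of $Y$) is essentially the standard one used in that reference, with the projective correction terms handled properly, including the boundary cases $m=\pd Z$ and $m=0$.
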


The following lemma is a special case of \cite[Lemma 6.3]{huard2013layer}.
\begin{lemma}{\rm (\cite[Lemma 6.3]{huard2013layer})}\label{lem-4}
Let $\V \subseteq \S^{<\infty}$ and $M\in \mod A$.
If $t_{\V}(M)\neq 0$, then $\ell\ell^{t_{\V}}(\Omega t_{\V}(M))\leqslant\ell\ell^{t_{\V}}(A_{A})-1. $
\end{lemma}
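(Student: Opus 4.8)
The plan is to push everything through the projective cover of $t_{\V}(M)$, combining two standard properties of projective covers with the fact (Lemma \ref{lem2.6}) that the $t_{\V}$-radical layer length decreases by exactly one under $F_{t_{\V}}$.

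First I would set $n:=\ell\ell^{t_{\V}}(A_{A})$ and $N:=t_{\V}(M)$, so $N\neq 0$ by hypothesis. Since $N$ is a torsion submodule it lies in $\mathcal{T}_{\V}$, and as $t_{\V}$ is idempotent $t_{\V}(N)=N$, hence $F_{t_{\V}}(N)=\rad(t_{\V}(N))=\rad N$. Let $p\colon P\to N$ be a projective cover and put $\Omega N:=\Ker p$ (this is the syzygy convention underlying $\Omega^{m}(\mod A)$). I would then invoke two facts about projective covers: (i) $p$ induces an isomorphism $\top P\cong\top N$, so $\top P\in\add\V'$, whence $P\in\mathcal{T}_{\V}$ and therefore $t_{\V}(P)=P$ and $F_{t_{\V}}(P)=\rad P$; and (ii) $\Ker p$ is superfluous in $P$, so $\Omega N\subseteq\rad P$.

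Next I would bound $\ell\ell^{t_{\V}}(\rad P)$. As $P$ is projective it is a direct summand of some $A_{A}^{k}$; since $t_{\V}$ and $F_{t_{\V}}$ are additive, $\ell\ell^{t_{\V}}(A_{A}^{k})=\ell\ell^{t_{\V}}(A_{A})$, and a direct summand of a module is both a sub- and a quotient module, so Theorem \ref{theorem1-radical-shortexact} gives $\ell\ell^{t_{\V}}(P)\leqslant n$. Also $\ell\ell^{t_{\V}}(P)\geqslant 1$, since $\ell\ell^{t_{\V}}(P)=0$ would force $P=t_{\V}(P)=0$ and hence $N=0$, contrary to hypothesis. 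Applying Lemma \ref{lem2.6} with $\alpha=t_{\V}$, $\beta=F_{t_{\V}}$ and $j=1$ then gives $\ell\ell^{t_{\V}}(\rad P)=\ell\ell^{t_{\V}}(F_{t_{\V}}(P))=\ell\ell^{t_{\V}}(P)-1\leqslant n-1$. Finally, feeding the short exact sequence $0\to\Omega N\to\rad P\to\rad P/\Omega N\to 0$ into the first inequality of Theorem \ref{theorem1-radical-shortexact} yields $\ell\ell^{t_{\V}}(\Omega t_{\V}(M))=\ell\ell^{t_{\V}}(\Omega N)\leqslant\ell\ell^{t_{\V}}(\rad P)\leqslant n-1$, which is the assertion. (The hypothesis $\V\subseteq\S^{<\infty}$ is what makes $n$ finite, so the bound is non-vacuous; the displayed inequalities are valid in $\mathbb{N}\cup\{\infty\}$ in any case.)

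The one genuinely delicate point is step (i): one must observe that the projective cover of a module lying in the torsion class $\mathcal{T}_{\V}$ again belongs to $\mathcal{T}_{\V}$, so that $F_{t_{\V}}(P)$ is literally $\rad P$ and Lemma \ref{lem2.6} becomes applicable. This is immediate from the explicit description $\mathcal{T}_{\V}=\{X\in\mod A\mid\top X\in\add\V'\}$ together with $\top P\cong\top N$; after that, the estimate is just a short assembly of Lemma \ref{lem2.6} and Theorem \ref{theorem1-radical-shortexact}.
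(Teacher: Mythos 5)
Your argument is correct and is essentially the standard one behind the cited \cite[Lemma 6.3]{huard2013layer} (the paper itself only gives the citation, not a proof): pass to the projective cover $P$ of $t_{\V}(M)$, note $P\in\mathcal{T}_{\V}$ so that $F_{t_{\V}}(P)=\rad P\supseteq\Omega t_{\V}(M)$, and combine Lemma \ref{lem2.6} with the monotonicity in Theorem \ref{theorem1-radical-shortexact}. The only cosmetic remark is that finiteness of $\ell\ell^{t_{\V}}(A_{A})$ does not actually require $\V\subseteq\S^{<\infty}$ (it follows from $F_{t_{\V}}^{i}(A)\subseteq\rad^{i}A$), but this does not affect the validity of your proof.
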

\begin{lemma}\label{lem-6}
Let $M,N \in \mod A$. If $ M\in \add (N)$,
then for any $n\geqslant 0$, we have
$\Omega^{n}(M)\in \add(\Omega^{n}(N)).$
\end{lemma}
\begin{proof}
Since $ M\in \add (N)$, we can set
$M \oplus L \cong N^{s}$ for some positive integer $n$ and $L\in \mod A$.
Thus,
$\Omega^{n}(M) \oplus \Omega^{n}(L) \cong
 \Omega^{n}(M \oplus L) \cong
  \Omega^{n}(N^{s})\cong (\Omega^{n}(N))^{s}.$
  That is, $\Omega^{n}(M)\in \add(\Omega^{n}(N)).$
\end{proof}

\begin{theorem}\label{thm-1}
Let $\V\subseteq \S^{<\infty}$. If $\ell\ell^{t_{\V}}(A_{A})\leqslant 2$,
then $\mod  A$ is $(\pd \V +2)$-syzygy-finite.
\end{theorem}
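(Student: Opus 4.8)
The plan is to show that every module of the form $\Omega^{\pd\V+2}(N)$ decomposes, up to projective summands, into finitely many indecomposable ``building blocks'' determined only by the algebra $A$ and the set $\V$. The torsion pair $(\mathcal{T}_{\V},\mathfrak{F}(\V))$ is the natural device: for $N\in\mod A$ the canonical sequence $0\to t_{\V}(N)\to N\to q_{t_{\V}}(N)\to 0$ splits $N$ into a torsion part and a part filtered by simples in $\V$. First I would apply Lemma \ref{lem-3}: since $q_{t_{\V}}(N)\in\mathfrak{F}(\V)$ is filtered by modules in $\V$ and every simple in $\V$ has projective dimension at most $\pd\V$, an easy induction on the length of the filtration (repeatedly using Lemma \ref{lem-3}(2) and the fact that $\Omega^{m}$ of a finite direct sum is the direct sum) gives $\pd q_{t_{\V}}(N)\leqslant \pd\V$. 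Hence by Lemma \ref{lem-3}(1), applied to the canonical sequence with $m=\pd\V$, we get
\begin{equation*}
\Omega^{\pd\V}(t_{\V}(N))\oplus P\cong \Omega^{\pd\V}(N)\oplus P'
\end{equation*}
for suitable projectives $P,P'$. So modulo projectives it suffices to control $\Omega^{\pd\V+2}(t_{\V}(M))$ for torsion modules, i.e. for modules of the form $t_{\V}(M)$.

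Next I would bring in the hypothesis $\ell\ell^{t_{\V}}(A_A)\leqslant 2$ via Lemma \ref{lem-4}: if $t_{\V}(M)\neq 0$ then $\ell\ell^{t_{\V}}(\Omega t_{\V}(M))\leqslant \ell\ell^{t_{\V}}(A_A)-1\leqslant 1$. Now set $X=\Omega t_{\V}(M)$, so $\ell\ell^{t_{\V}}(X)\leqslant 1$, meaning $t_{\V}F_{t_{\V}}(X)=0$, i.e. $\rad t_{\V}(X)$ lies in $\mathfrak{F}(\V)$ (this is exactly the $n=1$ case, using Lemma \ref{lemma6} and the definition of $\ell\ell^{t_{\V}}$). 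Consider once more the canonical sequence $0\to t_{\V}(X)\to X\to q_{t_{\V}}(X)\to 0$: by the argument of the first paragraph $\pd q_{t_{\V}}(X)\leqslant\pd\V$, so by Lemma \ref{lem-3}(1) we have $\Omega^{\pd\V+1}(X)\cong \Omega^{\pd\V+1}(t_{\V}(X))$ up to projectives, hence $\Omega^{\pd\V+2}(t_{\V}(M))\cong\Omega^{\pd\V+1}(t_{\V}(X))$ up to projectives. It now remains to bound $\Omega^{\pd\V+1}(Y)$ up to projectives for all $Y$ of the form $t_{\V}(X)$ with $\rad Y\in\mathfrak{F}(\V)$.

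For such a $Y$, look at $0\to\rad Y\to Y\to \top Y\to 0$. Here $\top Y$ is semisimple (a direct sum of copies of the finitely many simple $A$-modules), and $\rad Y\in\mathfrak{F}(\V)$ so $\pd\rad Y\leqslant\pd\V$; by Lemma \ref{lem-3}(2), $\Omega^{\pd\V+1}(Y)\cong\Omega^{\pd\V+1}(\top Y)$ up to projectives, and $\Omega^{\pd\V+1}(\top Y)$ is a direct summand of a direct sum of copies of $\Omega^{\pd\V+1}$ of the individual simple modules. Since there are only finitely many simple $A$-modules $S_1,\dots,S_r$, the module $T:=\Omega^{\pd\V+1}(S_1)\oplus\cdots\oplus\Omega^{\pd\V+1}(S_r)$ is a fixed finitely generated module; by Lemma \ref{lem-6} every $\Omega^{\pd\V+1}(\top Y)$ lies in $\add(T)$. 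Assembling the three reductions, every $\Omega^{\pd\V+2}(N)$ (with $N$ arbitrary, and $N$ with $t_{\V}(N)=0$ handled trivially since then $\Omega^{\pd\V}(N)$ is already projective) is, up to projective summands, a summand of a direct sum of copies of $T$, so $\Omega^{\pd\V+2}(\mod A)\subseteq\add(T\oplus A)$, which has only finitely many indecomposables. Thus $A$ is $(\pd\V+2)$-syzygy-finite.

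The main obstacle I anticipate is bookkeeping the projective summands accurately across the three successive applications of Lemma \ref{lem-3} — each step only gives an isomorphism after adding projectives on both sides — and making sure the ``$\add$'' containment survives these adjustments; a clean way is to phrase everything in the stable category $\underline{\mod}\,A$, where $\Omega$ is an honest functor and the claim becomes that $\Omega^{\pd\V+2}(\mod A)$ has finitely many indecomposable objects up to the projectives we have explicitly absorbed into $\add(T\oplus A)$. A secondary point requiring care is the base case $t_{\V}(M)=0$ versus $t_{\V}(M)\neq 0$ in the use of Lemma \ref{lem-4}, but when $t_{\V}(M)=0$ the module $M$ itself lies in $\mathfrak{F}(\V)$ so $\pd M\leqslant\pd\V$ and $\Omega^{\pd\V+2}(M)=0$, which is harmless.
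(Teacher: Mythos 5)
Your proof is correct and follows essentially the same route as the paper: both use the same three short exact sequences (the canonical torsion sequence for $M$, the canonical torsion sequence for $\Omega t_{\V}(M)$, and the radical/top sequence for $t_{\V}\Omega t_{\V}(M)$), apply Lemma \ref{lem-3} in the same places, and invoke Lemma \ref{lem-4} to get $\ell\ell^{t_{\V}}(\Omega t_{\V}(M))\leqslant 1$. The only differences are cosmetic --- you cascade the reductions rather than assembling three isomorphisms at the end, and your citation of Lemma \ref{lem-3}(2) for the bound $\pd q_{t_{\V}}(N)\leqslant\pd\V$ is a slight misattribution (the standard long-exact-sequence argument on the $\mathfrak{F}(\V)$-filtration is what is really needed), but the claim itself is correct.
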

\begin{proof}
We set $\delta=\pd \V$.
If $\ell\ell^{t_{\V}}(A_{A})=0$. For any module $M\in \mod A$, we have $\ell\ell^{t_{\V}}(M)\leqslant \ell\ell^{t_{\V}}(A_{A})=0$,
that is, $\ell\ell^{t_{\V}}(M)=0$. And then $M\in \mathfrak{F}(\V)$, moreover, $\pd M\leqslant \delta .$

Now consider the case $1 \leqslant \ell\ell^{t_{\V}}(A_{A})=2$.
We have the following two canonical two short exact sequences
\begin{align}\label{exact-1}
0 \longrightarrow  t_{\V} (M) \longrightarrow M
\longrightarrow  q_{t_{\V}}(M) \longrightarrow 0,
\end{align}
\begin{align}\label{exact-2}
0 \longrightarrow t_{\V}\Omega t_{\V} (M) \longrightarrow  \Omega t_{\V} (M)
\longrightarrow  q_{t_{\V}}\Omega t_{\V} (M) \longrightarrow 0,
\end{align}
\begin{align}\label{exact-3}
0 \longrightarrow \rad t_{\V}\Omega t_{\V} (M) \longrightarrow t_{\V}\Omega t_{\V} (M)
\longrightarrow  \top t_{\V}\Omega t_{\V} (M) \longrightarrow 0.
\end{align}

 For any module $M\in \mod A$.

 If $\ell\ell^{t_{\V}}(\Omega t_{\V}(M))= 0$,
by Lemma \ref{theorem1-radical-shortexact} and sequence (\ref{exact-2}) we know that
$$\ell\ell^{t_{\V}}(t_{\V}\Omega t_{\V} (M))=\ell\ell^{t_{\V}}(\Omega t_{\V}(M))=0;$$
and
by Lemma \ref{theorem1-radical-shortexact} and sequence (\ref{exact-3}) we know that
$$ 0 \leqslant \ell\ell^{t_{\V}}(\rad t_{\V}\Omega t_{\V} (M))\leqslant \ell\ell^{t_{\V}}(\Omega t_{\V}(M))=0.$$
That is, $\ell\ell^{t_{\V}}(\rad t_{\V}\Omega t_{\V} (M))=0.$
And then $\pd \rad t_{\V}\Omega t_{\V} (M)\leqslant \delta.$

If $\ell\ell^{t_{\V}}(\Omega t_{\V}(M))= 1$. By Lemma \ref{theorem1-radical-shortexact} and sequence (\ref{exact-2})
 we have
 $\ell\ell^{t_{\V}}(t_{\V}\Omega t_{\V}(M))= \ell\ell^{t_{\V}}(\Omega t_{\V}(M))= 1$.
 By Lemma \ref{lem2.6}, we have

$$\ell\ell^{t_{\V}}(\rad t_{\V}\Omega t_{\V}(M))= \ell\ell^{t_{\V}}(t_{\V}\Omega t_{\V}(M))-1= 1-1=0.$$
Thus, $\pd \rad t_{\V}\Omega t_{\V}(M)) \leqslant \delta$.

By the short exact sequence (\ref{exact-1}) and Lemma \ref{lem-3}(1), we have
\begin{align}\label{iso-1}
\Omega^{\delta+1}t_{\V}(M)\oplus P_{1}=\Omega^{\delta}(\Omega t_{\V}(M))\oplus P_{1}
\cong \Omega^{\delta+1}(M)\oplus P_{2}.
\end{align}

By the short exact sequence (\ref{exact-2}) and Lemma \ref{lem-3}(1), we have
\begin{align}\label{iso-2}
\Omega^{\delta+1}(t_{\V}\Omega t_{\V}(M))\oplus P_{3}\cong
\Omega^{\delta+1}(\Omega t_{\V}(M))\oplus P_{4}=\Omega^{\delta+2}t_{\V}(M)\oplus P_{4}.
\end{align}

By the short exact sequence (\ref{exact-3}) and Lemma \ref{lem-3}(2), we have
\begin{align}\label{iso-3}
\Omega^{\delta+1}(t_{\V}\Omega t_{\V}(M))\oplus P_{5}\cong
\Omega^{\delta+1}\top t_{\V}\Omega t_{\V}(M)\oplus P_{6}.
\end{align}

And then we have the following isomorphisms
\begin{align*}
\Omega^{\delta+2}(M)\oplus P_{4} \oplus P_{5}\cong &\Omega(\Omega^{\delta+1}(M)\oplus P_{2})\oplus P_{4} \oplus P_{5}\\
\cong &\Omega(\Omega^{\delta+1}t_{\V}(M)\oplus P_{1})\oplus P_{4} \oplus P_{5}\;\;(\text{ by (\ref{iso-1})})\\
\cong &\Omega^{\delta+2}(t_{\V}(M))\oplus P_{4} \oplus P_{5}\\
\cong &(\Omega^{\delta+1}t_{\V}\Omega t_{\V}(M)\oplus P_{3})\oplus P_{5}\;\;(\text{ by (\ref{iso-2})})\\
\cong &(\Omega^{\delta+1}t_{\V}\Omega t_{\V}(M)\oplus P_{5})\oplus P_{3}\;\;\\
\cong &(\Omega^{\delta+1}\top t_{\V}\Omega t_{\V}(M)\oplus P_{6}) \oplus P_{3}\;\;(\text{ by (\ref{iso-3})})\\
\in& \add (\Omega^{\delta+2}(A/\rad A)\oplus A).\;\;(\text{ by Lemma \ref{lem-6}})
\end{align*}
By assumptions and Lemma \ref{lem-4}, we always have $\ell\ell^{t_{\V}}(\Omega t_{\V}(M))\leqslant 1$.
Thus, for any module $M\in \mod A$, we have
$\Omega^{\delta+2}(M) \in \add (\Omega^{\delta+2}(A/\rad A)\oplus A)$.
That is, $\mod A$ is $(\delta+2$)-syzygy-finite.
\end{proof}
\begin{corollary}\label{cor-1}
If $\ell\ell^{\infty}(A_{A})\leqslant 2$,
then $ A$ is syzygy-finite.
\end{corollary}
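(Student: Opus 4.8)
The plan is to obtain Corollary \ref{cor-1} as an immediate specialisation of Theorem \ref{thm-1}. Recall that the infinite layer length $\ell\ell^{\infty}$ is, by definition (see \cite{huard2013layer,huard2009finitistic}), the radical layer length $\ell\ell^{t_{\V}}$ associated with the torsion pair $(\mathcal{T}_{\V},\mathfrak{F}(\V))$ for the particular choice $\V=\S^{<\infty}$, the set of simple right $A$-modules of finite projective dimension. Hence the hypothesis ``$\ell\ell^{\infty}(A_{A})\leqslant 2$'' is nothing but ``$\ell\ell^{t_{\S^{<\infty}}}(A_{A})\leqslant 2$'' for this $\V$, and the strategy is simply to feed this $\V$ into Theorem \ref{thm-1}.

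First I would check that $\V=\S^{<\infty}$ satisfies the standing hypotheses of Theorem \ref{thm-1}. Trivially $\S^{<\infty}\subseteq\S^{<\infty}$. Moreover $\pd\V=\pd\S^{<\infty}$ is finite: since $A$ is an artin algebra it has only finitely many simple modules up to isomorphism, so $\S^{<\infty}$ is a finite set, and each of its members has finite projective dimension by definition; therefore $\pd\S^{<\infty}=\sup\{\pd S\mid S\in\S^{<\infty}\}$ is attained and is a finite nonnegative integer (with the convention $\pd\S^{<\infty}=-1$ in the degenerate case $\S^{<\infty}=\varnothing$).

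Then I would apply Theorem \ref{thm-1} with $\V=\S^{<\infty}$. Since $\ell\ell^{t_{\S^{<\infty}}}(A_{A})=\ell\ell^{\infty}(A_{A})\leqslant 2$, the theorem yields that $\mod A$ is $(\pd\S^{<\infty}+2)$-syzygy-finite; that is, writing $m:=\pd\S^{<\infty}+2$, a fixed nonnegative integer, there are only finitely many non-isomorphic indecomposable modules in $\Omega^{m}(\mod A)$. By the definition of syzygy-finiteness recalled above, the existence of such an $m$ means exactly that $A$ is syzygy-finite, which is the assertion.

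I do not expect a genuine obstacle at this stage: all the substance lies in Theorem \ref{thm-1}, whose proof rests in turn on the additivity-type estimate for $\ell\ell^{t_{\V}}$ along short exact sequences (Theorem \ref{theorem1-radical-shortexact}) together with Lemma \ref{lem-4}. The only points requiring (entirely routine) verification in the corollary itself are the identification of $\ell\ell^{\infty}$ with the instance $\ell\ell^{t_{\S^{<\infty}}}$ of the radical layer length and the finiteness of $\pd\S^{<\infty}$; both follow at once from the relevant definitions and from the finiteness of the set of simple modules over an artin algebra.
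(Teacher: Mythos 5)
Your proposal is correct and follows exactly the paper's own route: take $\V=\S^{<\infty}$, identify $\ell\ell^{\infty}(A_A)$ with $\ell\ell^{t_{\S^{<\infty}}}(A_A)$ (the paper cites \cite[Example 5.8(1)]{huard2013layer} for this), and invoke Theorem \ref{thm-1}. The additional observations about finiteness of $\S^{<\infty}$ and $\pd\S^{<\infty}$ are routine sanity checks, not a different argument.
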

\begin{proof}
Let $\V=\S^{<\infty}$, we have
$\ell\ell^{t_{\V}}(A_{A})=\ell\ell^{\infty}(A_{A})\leqslant 2$ by \cite[Example 5.8(1)]{huard2013layer}.
And then by \ref{thm-1}, we know that $A_{A}$ is syzygy-finite.
\end{proof}
The notion of the left big finitistic dimension conjecture can be seen in \cite{rickard2019unbounded}.
\begin{corollary}\label{cor-3}
Let $A$ be a finite dimensional algebra over a field $K$.
Let $\V\subseteq \S^{<\infty}$. If $\ell\ell^{t_{\V}}(A_{A})\leqslant 2$,
then $$\lFindim A <\infty,$$
where $\lFindim A =\sup \{ \pd M \;|\; M \text{ is a left }
 \Lambda \text{-module with } \pd M <\infty \}$; that is,
 the left big finitistic dimension conjecture holds.
\end{corollary}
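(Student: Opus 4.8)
The plan is to read this statement off the derived-category bound already at our disposal and off Rickard's theorem linking the dimension of the bounded derived category with the big finitistic dimension conjecture; accordingly the proof is short, concatenating Theorem~\ref{thm-1}, Corollary~\ref{derived-dimension1}, and the result cited as \cite{rickard2019unbounded}.

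First I would observe that, since $\V\subseteq\S^{<\infty}$ is a finite set of simple modules of finite projective dimension, $\delta:=\pd\V$ is a finite integer ($\geqslant -1$). By Theorem~\ref{thm-1}, the hypothesis $\ell\ell^{t_{\V}}(A_A)\leqslant 2$ makes $\mod A$ a $(\delta+2)$-syzygy-finite algebra, and then Corollary~\ref{derived-dimension1} gives $\dim D^b(\mod A)\leqslant\delta+3<\infty$.

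Next I would invoke Rickard's theorem from \cite{rickard2019unbounded}: for a finite-dimensional algebra $B$ over a field, finiteness of $\dim D^b(\mod B)$ forces the big finitistic dimension of $B$ to be finite. To obtain the conclusion on the \emph{left}, I would apply this to $B=A^{\mathrm{op}}$. Here $D^b(\mod A^{\mathrm{op}})$ is triangle-equivalent to the opposite category $D^b(\mod A)^{\mathrm{op}}$, and the building operations $\langle-\rangle_{d+1}$ of Definition~\ref{def2.1} are self-dual, so Rouquier dimension is unchanged by passing to the opposite category; hence $\dim D^b(\mod A^{\mathrm{op}})=\dim D^b(\mod A)<\infty$. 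Therefore the big finitistic dimension of $A^{\mathrm{op}}$ is finite, i.e. $\lFindim A<\infty$, as claimed.

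Since the whole argument is a direct application of earlier results together with one citation, there is no genuine obstacle. The point that deserves attention is the change in the standing hypotheses: Rickard's theorem is formulated for finite-dimensional algebras over a field rather than for arbitrary artin algebras, and this is precisely why Corollary~\ref{cor-3}, in contrast to Theorem~\ref{thm-1}, includes that restriction. A secondary, cosmetic, point is the left/right bookkeeping, which the opposite-category invariance of derived dimension takes care of; alternatively one may cite Rickard's theorem in the form that directly produces the left big finitistic dimension from $\dim D^b(\mod A)<\infty$ and omit the opposite-category remark entirely.
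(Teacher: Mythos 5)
Your proof rests on a misstatement of Rickard's result, and the gap is fatal. You claim that Rickard's theorem says ``for a finite-dimensional algebra $B$ over a field, finiteness of $\dim D^b(\mod B)$ forces the big finitistic dimension of $B$ to be finite.'' No such theorem exists, and indeed it cannot: by Theorem~\ref{thm1.1}(1) (Rouquier's bound $\dim D^b(\mod B)\leqslant \LL(B)-1$), the Rouquier dimension of $D^b(\mod B)$ is \emph{always} finite for any artin algebra $B$, so your alleged implication would settle the big finitistic dimension conjecture in full generality --- which is open. Thus the step from Corollary~\ref{derived-dimension1} to $\lFindim A<\infty$ is not supported, and your subsequent discussion of opposite categories is moot since it merely transports a nonexistent conclusion from $A$ to $A^{\mathrm{op}}$.

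What Rickard actually proves concerns whether the injective modules generate the \emph{unbounded} derived category $D(\Mod A)$ as a localizing subcategory, not the Rouquier dimension of $D^b(\mod A)$. The paper's proof of Corollary~\ref{cor-3} bypasses Corollary~\ref{derived-dimension1} entirely and goes directly from Theorem~\ref{thm-1}: $A$ is $(\pd\V+2)$-syzygy-finite; by \cite[Corollary 7.3]{rickard2019unbounded}, syzygy-finiteness implies that injectives generate $D(\Mod A)$; and then \cite[Theorem 4.3]{rickard2019unbounded} (together with \cite[Definitions 4.1, 4.2]{rickard2019unbounded} for the precise notions involved) gives the finiteness of the big finitistic dimension. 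That is the chain you need. Your proof as written does not establish the statement.
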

\begin{proof}
By\cite[Definition 4.1, Definition 4.2, Corollary 7.3, Theorem 4.3]{rickard2019unbounded} and Theorem \ref{thm-1}.
\end{proof}

As a consequence we have the following upper bound on the dimension
$\dim D^{b}(\mod A)$ of the
bounded derived category of $\mod A$
in the sense of Rouquier(see
\cite{rouquier2008dimensions,rouquier2006representation,krause2006rouquier}).
Here, we have an interesting corollary as follows
\begin{corollary}\label{derived-dimension1}
Let $\V\subseteq \S^{<\infty}$.
Suppose that $\ell\ell^{t_{\V}}(A_{A})\leqslant 2$.
Then $\dim {D^{b}(\mod A)}\leqslant \pd \V+3$.
\end{corollary}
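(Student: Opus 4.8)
The plan is to combine Theorem \ref{thm-1}, which gives that $A$ is $(\pd\V+2)$-syzygy-finite, with the standard technique for bounding the dimension of $D^b(\mod A)$ via a finite syzygy-closed generator. Write $\delta=\pd\V$. By Theorem \ref{thm-1}, every module $M\in\mod A$ satisfies $\Omega^{\delta+2}(M)\in\add\bigl(\Omega^{\delta+2}(A/\rad A)\oplus A\bigr)$. Set $G:=\bigl(\bigoplus_{i=0}^{\delta+2}\Omega^{i}(A/\rad A)\bigr)\oplus A$, viewed as an object of $D^b(\mod A)$; I claim $D^b(\mod A)=\langle G\rangle_{\delta+4}$, which gives the bound $\dim D^b(\mod A)\leqslant\delta+3$.

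First I would reduce to a single module: since every object of $D^b(\mod A)$ is obtained from its cohomologies $H^i$ (finitely many, each in $\mod A$) by iterated extensions, and each such extension step costs one factor in the $\diamond$-filtration, it suffices to show $M\in\langle G\rangle_{\delta+3}$ for every $M\in\mod A$ (the extra $+1$ absorbing the assembly of a bounded complex from its cohomology, after a shift — one uses that a complex with cohomology concentrated in a single degree is, up to shift, that module, and the general bounded complex is an iterated extension of such, but the standard sharper count shows the cohomological assembly does not inflate the generation time beyond one extra step in the relevant normalization; alternatively invoke \cite[Lemma]{rouquier2008dimensions}-style reductions as used in \cite{zheng2020upper}). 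Next, for a fixed $M$, I would use the syzygy short exact sequences $0\to\Omega^{j+1}(M)\to P_j\to\Omega^j(M)\to0$ with $P_j$ projective: each one is a triangle in $D^b(\mod A)$ exhibiting $\Omega^j(M)$ as an extension of $P_j[0]$ by $\Omega^{j+1}(M)[1]$. Running this for $j=0,1,\dots,\delta+1$ builds $M$ from $\delta+2$ projective modules together with $\Omega^{\delta+2}(M)$; since $\Omega^{\delta+2}(M)\in\add(\Omega^{\delta+2}(A/\rad A)\oplus A)\subseteq\add G$ and all the $P_j\in\add A\subseteq\add G$, this places $M$ in $\langle G\rangle_{\delta+3}$ (a chain of $\delta+2$ extensions starting from one object of $\langle G\rangle_1$ yields membership in $\langle G\rangle_{\delta+3}$).

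The main obstacle is the bookkeeping of the generation index — getting exactly $\delta+3$ and not $\delta+4$ or worse — and correctly handling the passage from modules to bounded complexes. The clean way is to cite the general principle (as in \cite[proof of Theorem 3.8]{zheng2020upper} or \cite{rouquier2008dimensions}) that if $\Omega^{m}(\mod A)\subseteq\add X$ for some $X$, then $\dim D^b(\mod A)\leqslant m+\text{(something)}$; here with $m=\delta+2$ one gets $\dim D^b(\mod A)\leqslant(\delta+2)+1=\delta+3$, the ``$+1$'' coming from the single projective resolution step needed to replace the complex-assembly count, since $A/\rad A$ already generates all semisimples and hence, via $\delta+2$ syzygy triangles, all modules. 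I would therefore structure the proof as: (i) recall $\Omega^{\delta+2}(\mod A)\subseteq\add(\Omega^{\delta+2}(A/\rad A)\oplus A)$ from Theorem \ref{thm-1}; (ii) set $G$ as above and show $\mod A\subseteq\langle G\rangle_{\delta+3}$ by the syzygy-triangle induction; (iii) conclude $D^b(\mod A)=\langle G\rangle_{\delta+3}$ by the standard dévissage over cohomological degrees, hence $\dim D^b(\mod A)\leqslant\delta+3=\pd\V+3$.
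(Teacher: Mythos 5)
The paper's proof is a one-line appeal to Theorem \ref{thm-1} together with \cite[Corollary~3.6]{Asadollahi2012On}, which is exactly the statement that an $m$-syzygy-finite algebra has $\dim D^b(\mod A)\leqslant m+1$. You instead try to re-derive that cited result from scratch. The module-level part of your argument is correct and matches the standard proof: with $G=\bigl(\bigoplus_{i=0}^{\delta+2}\Omega^{i}(A/\rad A)\bigr)\oplus A$ and $\Omega^{\delta+2}(M)\in\add G\subseteq\langle G\rangle_1$, running $\delta+2$ syzygy triangles indeed gives $M\in\langle G\rangle_{\delta+3}$ for every $M\in\mod A$.

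The gap is in step (iii), the passage from modules to bounded complexes. You claim that assembling a bounded complex from its cohomology objects ``does not inflate the generation time beyond one extra step,'' so that $D^b(\mod A)=\langle G\rangle_{\delta+4}$. This is false as stated: a complex $C^{\bullet}$ with $w$ nonzero cohomology modules is built from them by $w-1$ successive extensions, and if each cohomology lies in $\langle G\rangle_{\delta+3}$ this only gives $C^{\bullet}\in\langle G\rangle_{w(\delta+3)}$, a bound growing without limit as the amplitude $w$ grows. The cohomological d\'evissage you invoke simply does not produce a uniform bound; this is precisely the nontrivial content of \cite[Corollary~3.6]{Asadollahi2012On} (and, analogously, of Rouquier's $\LL(A)-1$ bound), where the trick is not to filter by cohomology but to take a projective resolution of the whole complex and truncate it, so that the tail is identified with (a shift of) a single syzygy module and the head is controlled separately. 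You gesture at this when you say ``alternatively invoke Rouquier-style reductions,'' and that fallback is in effect the paper's proof: citing \cite[Corollary~3.6]{Asadollahi2012On}. But as an actual argument your sketch does not close, and the specific claim that the complex-assembly costs only one extra $\diamond$-step is incorrect. To fix it, either cite \cite[Corollary~3.6]{Asadollahi2012On} directly (as the paper does), or replace (iii) with the genuine projective-resolution truncation argument.
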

\begin{proof}
By Theorem \ref{thm-1} and \cite[Corollary 3.6]{Asadollahi2012On}.
\end{proof}

\begin{corollary}{\rm
  Let $A$ be an artin algebra. 
  Let $\V\subseteq \S^{<\infty}$.
   If $\ell\ell^{t_{\V}}(A_{A})\leqslant 2$,
  then $$\Psi\dim(\mod A) <\infty,$$
  where $\Psi\dim(\mod A) $ is defined in
   \cite{Lanzilotta2017Igusa}.}
\end{corollary}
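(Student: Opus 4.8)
The final statement to prove is the corollary asserting that $\Psi\dim(\mod A) < \infty$ under the hypothesis $\ell\ell^{t_{\V}}(A_A) \leqslant 2$, where $\Psi$ is the Igusa–Todorov function (more precisely the $\Psi$-dimension in the sense of Lanzilotta–Mata).

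My plan is to deduce this immediately from Theorem \ref{thm-1} together with known results relating syzygy-finiteness to the finiteness of the $\Psi$-dimension. The key observation is that Theorem \ref{thm-1} tells us $\mod A$ is $(\pd\V+2)$-syzygy-finite, i.e. there are only finitely many indecomposable summands appearing in $\Omega^{\pd\V+2}(\mod A)$; in fact the proof shows more precisely that $\Omega^{\pd\V+2}(M) \in \add(\Omega^{\pd\V+2}(A/\rad A) \oplus A)$ for every $M \in \mod A$. So I would first invoke Theorem \ref{thm-1} to fix an integer $m = \pd\V + 2$ such that $\Omega^{m}(\mod A) = \add(M_0)$ for a single module $M_0$ (namely $M_0 = \Omega^{m}(A/\rad A)\oplus A$).

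Next I would recall the behavior of the Igusa–Todorov function $\Psi$ (and the $\Psi$-dimension) under syzygies: the functions $\phi$ and $\Psi$ are defined on the free abelian group generated by isomorphism classes of indecomposable modules, $\Psi$ is subadditive on short exact sequences up to a bounded error controlled by the finitistic dimension of the end term, and crucially $\Psi$ is bounded on any subcategory that is generated (under $\add$) by finitely many modules. Since $\Omega^{m}(\mod A) \subseteq \add(M_0)$, the function $\Psi$ restricted to $\add(M_0)$ takes only finitely many values, hence is bounded by some constant $C$. Then for an arbitrary $M \in \mod A$, using the standard inequality $\Psi(M) \leqslant \Psi(\Omega^{m}(M)) + m$ (which follows from the fact that $\phi(\Omega X) \leqslant \phi(X)$ and from unwinding the definition of $\Psi$ across the first $m$ syzygies, together with $\pd$-additivity), we get $\Psi(M) \leqslant C + m$, so $\Psi\dim(\mod A) \leqslant C + m < \infty$.

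The main (and essentially only) obstacle here is citational rather than mathematical: one must locate in \cite{Lanzilotta2017Igusa} (or in the foundational Igusa–Todorov paper, or in work of Huard–Lanzilotta) the precise statement that syzygy-finiteness — or even just the weaker fact that $\Omega^{m}(\mod A)$ is contained in $\add$ of finitely many modules for some $m$ — implies $\Psi\dim(\mod A)<\infty$. This is well known, and in fact $\Psi$-finiteness is a standard consequence of being syzygy-finite; the bound $\Psi\dim(\mod A) \leqslant \Psi(M_0) + \pd\V + 2$ is exactly what the argument above yields. So the proof is simply: by Theorem \ref{thm-1}, $A$ is $(\pd\V+2)$-syzygy-finite, and every syzygy-finite artin algebra has finite $\Psi$-dimension by \cite{Lanzilotta2017Igusa}; therefore $\Psi\dim(\mod A)<\infty$.
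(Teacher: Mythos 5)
Your proposal is correct and takes essentially the same route as the paper: invoke the syzygy-finiteness of $A$ (Theorem \ref{thm-1}) and then cite \cite[Theorem 3.2]{Lanzilotta2017Igusa}, which states that syzygy-finite algebras have finite $\Psi$-dimension. The paper's own one-line proof cites Corollary \ref{cor-1} rather than Theorem \ref{thm-1} --- almost certainly a slip, since Corollary \ref{cor-1} treats only the special case $\V=\S^{<\infty}$ while the hypothesis here allows any $\V\subseteq\S^{<\infty}$; your citation of Theorem \ref{thm-1} is the cleaner and more accurate reference, and your sketch of why syzygy-finiteness bounds $\Psi$ is exactly the mechanism behind the cited Lanzilotta--Mata result.
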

\begin{proof}
  By Corollary \ref{cor-1} and \cite[Thoerem 3.2]{Lanzilotta2017Igusa}.
\end{proof}
\begin{example}
{\rm (\cite{zheng2020upper})
Consider the bound quiver algebra $\Lambda=kQ/I$, where $k$ is an algebraically closed field and $Q$
is given by
$$\xymatrix{
&1 \ar@(l,u)^{\alpha_{1}}\ar[r]^{\alpha_{2}}  \ar[ld]_{\alpha_{m+1}}\ar[rd]^{\alpha_{m+2}}
&2\ar[r]^{\alpha_{3}}&{3}\ar[r]^{\alpha_{4}}  &{4}\ar[r]^{\alpha_{5}}&\cdots\ar[r]^{\alpha_{m}}&m\\
m+1&&m+2&&&&
}$$
and $I$ is generated by
$\{\alpha_{1}^{2},\alpha_{1}\alpha_{m+1},\alpha_{1}\alpha_{m+2},\alpha_{1}\alpha_{2},
\alpha_{2}\alpha_{3}\cdots\alpha_{m}\}$ with $m\geq 10$.
Then the indecomposable projective $\Lambda$-modules are
$$\xymatrix@-1.0pc@C=0.1pt
{ &  &  &1\edge[lld]\edge[ld]\edge[d]\edge[dr]
&  && 2\edge[d] &&&& & &&&&  &&&&  &\\
&1& m+1&m+2&2\edge[d] && 3\edge[d] && 3\edge[d] &&&& &&&& &\\
P(1)=&  &  &    &3\edge[d] &P(2)=&4\edge[d]  &P(3)=&4\edge[d] &P(m+1)=m+1,&P(m+2)=m+2&\\
&  &  &  &\vdots\edge[d]&&\vdots\edge[d]&&\vdots\edge[d]&& &&&& &\\
&  &  &  & \;m-1, &&\;m, && \;m,  &&&& &&&\\
&  &  &  &  &&&&  & &&&& & &&&& &&&\\
}$$
and $P(i+1)=\rad P(i)$ for any $2 \leqslant i\leqslant m-1$.

We have
\begin{equation*}
\pd S(i)=
\begin{cases}
\infty, &\text{if}\;\;i=1;\\
1,&\text{if} \;\;2 \leqslant  i\leqslant m-1;\\
0,&\text{if}\;\; m \leqslant  i\leqslant m+2.
\end{cases}
\end{equation*}
So $\mathcal{S}^{\infty}=\{S(1)\}$ and $\mathcal{S}^{<\infty}=\{ S(i)\mid 2\leqslant i\leqslant m+2\}$.

Let $\mathcal{V}:=\{S(i)\mid 3\leqslant i \leqslant m-1\}\subseteq\mathcal{S}^{<\infty}$.
 Then $\pd\V =1$ and $\ell\ell^{t_{\mathcal{V}}}(\Lambda)=2$(see \cite[Example 4.1]{zheng2020upper})

(1) By Theorem \ref{thm1.1}(1), we have $\dim D^{b}(\mod \Lambda) \leqslant \LL(\Lambda)-1=m-2.$

(2) By Theorem \ref{thm1.1}(3), we have
  $\dim D^{b}(\mod \Lambda) \leqslant (\pd\mathcal{V}+2)(\ell\ell^{t_{\mathcal{V}}}(\Lambda)+1)-2=7.$

(3) By Theorem \ref{thm1.1}(4), we have
  $\dim D^{b}(\mod \Lambda) \leqslant 2(\pd\mathcal{V}+\ell\ell^{t_{\mathcal{V}}}(\Lambda))+1=7.$

(4) By Corollary \ref{derived-dimension1},
  $\dim D^{b}(\mod \Lambda) \leqslant \pd\mathcal{V}+3=4.$
That is, we can get a better upper bound.
}
\end{example}

\begin{example}
{\rm (\cite[Example 3.21]{zheng2020extension})
Consider the bound quiver algebra $\Lambda=kQ/I$, where $k$ is a field and $Q$ is given by
$$\xymatrix{
&2n+1\\
{2n}&1\ar[l]^{\alpha_{2n}}\ar[u]^{\alpha_{2n+1}}\ar[r]^{\alpha_{1}} \ar[d]^{\alpha_{n+1}}
&2\ar[r]^{\alpha_{2}} &{3}\ar[r]^{\alpha_{3}}&\cdots \ar[r]^{\alpha_{n-1}}&{n}\\
&n+1\ar[r]^{\alpha_{n+2}}
&n+2\ar[r]^{\alpha_{n+3}}&n+3\ar[r]^{\alpha_{n+4}}&\cdots\ar[r]^{\alpha_{2n-1}}&2n-1
}$$
and $I$ is generated by
$\{\alpha_{i}\alpha_{i+1}\;| \;n+1\leqslant i\leqslant 2n-1\}$ with $n\geqslant 6$.
Then the indecomposable projective $\La$-modules are
$$\xymatrix@-1.0pc@C=0.1pt
{& &1\edge[d]\edge[ld]\edge[rd]\edge[rrd]&&&&   & 2\edge[d]  &&&&  & &&&&&   &&& &\\
&n+1  &2\edge[d]  &2n&2n+1&&  &3\edge[d]  &&&& & 3\edge[d] &&&& &j\edge[d] &&&  & &\\
P(1)=& &3\edge[d]   &&&&P(2)=&4\edge[d]  &&&&P(3)=&4\edge[d] &&&&P(j)=&j+1,&&&&P(l)=l,&\\
& &\vdots\edge[d]&&&&   &\vdots\edge[d]&&&& &\vdots\edge[d]&&&&  & &&& &\\
&&n,&&&&  &n,   &&&& &n,  &&&& &  &&&  &\\
&&  &&&&  &   &&&& &  &&&&  &  &&&  &\\
}$$
where $n+1 \leqslant j\leqslant 2n-2$, $2n-1 \leqslant l\leqslant 2n+1$ and $P(i+1)=\rad P(i)$
for any $2 \leqslant i\leqslant n-1$.

We have
\begin{equation*}
\pd S(i)=
\begin{cases}
n-1, &\text{if}\;\;i=1;\\
1,&\text{if} \;\;2 \leqslant  i\leqslant n-1;\\
0,&\text{if} \;\; i=n, 2n, 2n+1;\\
2n-1-i,&\text{if}\;\; n+1 \leqslant  i\leqslant 2n-1.
\end{cases}
\end{equation*}
So $\S^{<\infty}=\{$all simple modules in $\mod \Lambda\}$.
Let $\V:=\{S(i)\mid 2\leqslant i \leqslant n\}(\subseteq\mathcal{S}^{<\infty})$.
 Then $\pd\V =1$ and $\ell\ell^{t_{\mathcal{V}}}(\Lambda)=2$(see \cite[Example 3.21]{zheng2020extension})

(1) By Theorem \ref{thm1.1}(1), we have $\dim D^{b}(\mod \Lambda) \leqslant \LL(\Lambda)-1=n-1.$

(2) By Theorem \ref{thm1.1}(2), we have
  $\dim D^{b}(\mod \Lambda) \leqslant \gldim \Lambda =n-1.$

(3) By Theorem \ref{thm1.1}(3), we have
  $\dim D^{b}(\mod \Lambda) \leqslant (\pd\mathcal{V}+2)(\ell\ell^{t_{\mathcal{V}}}(\Lambda)+1)-2=7.$

(4) By Theorem \ref{thm1.1}(4), we have
  $\dim D^{b}(\mod \Lambda) \leqslant 2(\pd\mathcal{V}+\ell\ell^{t_{\mathcal{V}}}(\Lambda))+1=7.$

(5) By Corollary \ref{derived-dimension1},
  $\dim D^{b}(\mod \Lambda) \leqslant \pd\mathcal{V}+3=4.$

That is, we also can get a better upper bound than \cite[Example 4.1]{zheng2020upper}.

}
\end{example}

\vspace{0.6cm}

{\bf Acknowledgements.}
This work was supported by the National Natural Science Foundation of China(Grant No. 12001508).



\end{document}